\newtheorem{theorem}{Theorem}
\newtheorem{lemma}{Lemma}
\newtheorem{corollary}{Corollary}
\theoremstyle{definition}
\newtheorem{definition}{Definition}
\newtheorem{example}{Example}
\theoremstyle{remark}
\newtheorem{remark}{Remark}
\numberwithin{equation}{section}
\begin{document}
\setcounter{page}{1}
\begin{center}
{\textbf{\Large Non-linear programming problem for semi strongly $E$-preinvexity}}\\
\bigskip
{\textbf {Akhlad Iqbal and Askar Hussain}}\\
{\text{akhlad6star@gmail.com and askarhussain59@gmail.com}}\\Department of Mathematics\\
 Aligarh Muslim University, Aligarh-202002, India\\[0pt]
  \end{center}
 \bigskip
  \noindent
\textbf{Abstract}: 
In this article, we present semi strongly $E$-preinvexity and semi strongly $E$-invexity. To demonstrate the existence of these functions, certain nontrivial examples have been developed. Several significant relationships and characterizations of these functions on strongly $E$-invex sets are discussed. Furthermore, we consider a non-linear programming problem for semi strongly $E$-preinvex functions and investigate relationships between the set of optimal solutions and these functions.\\

\noindent
{\bf{Keywords}} {: Strongly $E$-invex sets, Semi strongly $E$-preinvex functions, Semi quasi strongly $E$-preinvex functions, Non-linear programming problem $(NLPP)$.}\\

\noindent
{\bf{AMS Subject Classification}}{: 26B25, 26D15, 90C25}

\section{\bf{Introduction}}

\noindent
 Convexity and its generalizations is an important branch of mathematical analysis, which have many applications in pure and applied mathematics such as optimization theory, science and engineering see: \cite{Bazaara,Akhlad12,Akhlad11,Jeyakumar,Pini,Suneja,youness2}. Fulga et al. \cite{Fulga} introduced $E$-preinvexity with the support of nontrivial examples, considered a non-linear programming problem for our results, and also showed that a strict local minimum point is a strict global minimum point for a non-linear programming problem.
  Youness \cite{Youness3} introduced the class of strongly $E$-convex function which has very useful properties and results related to this class of functions and satisfies certain local-global minimum points criteria for non-linear programming problem. The extension of strongly $E$-convexity to geodesic strongly $E$-convexity from linear space to Riemannian manifolds via geodesics and techniques has been developed by Kilickman et al. \cite{Adem1}.  An attempt has been made to preserve several interesting properties and results of strongly $E$-convexity for geodesic strongly $E$-convexity, several interesting properties of their geodesic strongly $E$-convexity discussed in \cite{Adem1}.\\
 
 Motivated by Youness \cite{Youness3} and  Kilickman \cite{Adem1}, Hussain \cite{Hussain1} introduced the quasi strongly $E$-convexity which is extension of strongly $E$-convexity. Several interesting properties and results to this function have been discussed. Later, the concept of strongly $E$-preinvexity was studied by Iqbal et al., \cite{Hussain2, Hussain3} which generalized the concept of $E$-preinvexity defined by Fulga, \cite{Fulga}. Further, Youness \cite{Youness4} generalized the concept of strongly $E$-convexity to semi strongly $E$-convexity and considered a non-linear programming problem for semi strongly $E$-convex function. Attempts have been made to preserve several properties and results of semi strongly $E$-convexity and strongly $E$-preinvexity for this new class of semi strongly $E$-preinvexity.
 
\noindent
Motivated and inspired by research works see: \cite{Tair,Ben,Adem2,Yang,Youness1}, we introduce the class of semi strongly $E$-preinvexity and semi strongly $E$-invexity. This paper is divided as follows: Section \ref{2P} contains preliminaries and Section \ref{3S} contains the several interesting  properties  and definitions. Some nontrivial examples have been constructed in support of these definitions. An important relationship and characterization of these functions have been established. In Section \ref{4S},  a non-linear programming problem for semi strongly E-preinvex functions has been considered. We discuss the uniqueness of a global optimal solution and present a condition for a set of optimal solutions to be strongly $E$-invex. We conclude this article in Section \ref{5conl}.

\section{\bf{Preliminaries}}\label{2P}
   The concept of $E$-convexity with respect to map $E:R^{n}\rightarrow R^{n}$ was defined by Youness \cite{Youness1}. Later, they \cite{Youness3,Youness4} extended this concept to the strongly $E$-convexity and semi strongly $E$-convexity as follows:  
   \begin{definition}
   	\cite{Youness3}\label{1D}
   	A non empty set $S\subseteq R^{n}$ is said to be a strongly $E$-convex (SEC) set with respect to (w.r.t.) a map $E:R^{n}\rightarrow R^{n}$, if  $\forall s,t\in S$,~$\alpha\in[0,1]~\lambda\in[0,1]$, we have
   	\begin{eqnarray*}
   		\lambda\left(\alpha s+Es\right)+(1-\lambda)\left(\alpha t+Et\right)\in S.
   	\end{eqnarray*} 
   
   \end{definition}
   \begin{definition}\label{2D}
   	\cite{Youness4}
   	Let $S\subseteq R^{n}$ be a SEC set. A real valued function $h:S\subseteq R^{n}\rightarrow R$ is said to be semi strongly $E$-convex (SSEC) w.r.t. a map  $E:R^{n}\rightarrow R^{n}$ on $S$, if
   	\begin{eqnarray*}
   		h(\lambda(\alpha s+Es)+(1-\lambda)(\alpha t+Et))\leq \lambda h(s)+(1-\lambda)h(t),
   	\end{eqnarray*} 
   $\forall s,t\in S,~\alpha\in[0,1]~\lambda\in[0,1]$.\\
   
   \noindent
   	If the above inequality is strict for every $s,t\in S$,~ $\alpha s+Es\neq \alpha t+Et,~\alpha\in[0,1],\lambda\in(0,1)$, then $h$ is said to be strictly semi strongly $E$-convex (SSSEC) function. 
   \end{definition}
\begin{definition}\label{2D}
	\cite{Youness4}
	Let $S\subseteq R^{n}$ be a SEC set. A real valued function $h:S\subseteq R^{n}\rightarrow R$ is said to be semi quasi strongly $E$-convex (SQSEC) w.r.t. a map  $E:R^{n}\rightarrow R^{n}$ on $S$, if
	\begin{eqnarray*}
		h(\lambda(\alpha s+Es)+(1-\lambda)(\alpha t+Et))\leq\max \{ h(s),h(t)\},
	\end{eqnarray*} 
	$\forall s,t\in S,~\alpha\in[0,1]~\lambda\in[0,1]$.\\
	
	\noindent
	If the above inequality is strict for every $s,t\in S$,~ $\alpha s+Es\neq \alpha t+Et,~\alpha\in[0,1],\lambda\in(0,1)$, then $h$ is said to be strictly semi quasi strongly $E$-convex (SSQSEC) function. 
\end{definition}
    Iqbal et al. \cite{Hussain2} defined strongly $E$-preinvexity as follows:
  \begin{definition}\cite{Hussain2}\label{Defn 1}
  	A set $S\subseteq R^{n}$ is said to be strongly $E$-invex (SEI)  w.r.t. $\Psi\colon R^{n}\times R^{n}\to R^{n}$, if  
 \begin{eqnarray*}
 	\alpha t+Et+\lambda\Psi(\alpha s+Es,\alpha t+Et)\in S,
 \end{eqnarray*}
  $\forall s,t\in S,~\alpha\in[0,1]~\lambda\in[0,1]$.\\
  
  \noindent
 	If $\Psi(\alpha s+Es, \alpha t+Et)=\alpha (s-t)+(Es-Et)~\forall s,t\in R^{n}$, then Definition \ref*{Defn 1} reduces to Definition \ref{1D} defined by
 	Youness \cite{Youness3}.
 \end{definition}
  \begin{definition}\cite{Hussain2} 
 	   A function $h:S\rightarrow R$ is said to be strongly $E$-preinvex (SEP) w.r.t. $\Psi\colon R^{n}\times R^{n}\to R^{n}$ on SEI set $S$, if 
 \begin{eqnarray*}
 	 h(\alpha t+Et+\lambda\Psi(\alpha s+Es,\alpha t+Et))\leq \lambda h(Es)+(1-\lambda)h(Et),
 \end{eqnarray*}
$\forall s,t\in S$, $\alpha\in[0,1],\lambda\in[0,1]$.
 \end{definition}
 \begin{definition} \cite{Hussain2}
 	  A function $h:S\subseteq R^{n}\rightarrow R$ is said to be pseudo strongly $E$-preinvex (PSEP) w.r.t. $\Psi$ on SEI set $S$, if $\exists$ a strictly positive function $b:R^{n}\times R^{n}\rightarrow R$ s.t. $h(Es)< h(Et)$  
 	\begin{eqnarray*}\implies h(\alpha t+Et+\lambda\Psi(\alpha s+Es,\alpha t+Et))\leq h(E(t))+\lambda(\lambda-1)b(Es,Et),
 	\end{eqnarray*} $\forall~s,t \in S$, $\alpha \in[0,1],~\&~\lambda \in[0,1]$. 
 \end{definition}
 \begin{definition}\cite{Hussain3}
  	A function $h:R^{n}\rightarrow R$ is said to be quasi strongly $E$-preinvex (QSEP) w.r.t. $\Psi$ on SEI set $S$, if  
  	\begin{eqnarray*}
  		h(\alpha t+Et+\lambda\Psi(\alpha s+Es,\alpha t+Et))\leq \max\{h(Es),h(Et)\},
  		\end{eqnarray*}
  	 $\forall~s,t \in S$, $\alpha \in[0,1],~\&~\lambda \in[0,1]$.\\
  	 
    \noindent
  	If the above inequality is strict and $h(Es)\neq h(Et),~\forall~s,t\in S~\lambda\in(0,1)$, then $h$ is said to be strictly quasi strongly $E$-preinvex(SQSEP) function.
  \end{definition}

\begin{definition}\cite{Hussain2}\label{5P} 
	Let ${S}\subseteq  {R}^{n}$ be an open SEI set and  ${h}\colon {S}\rightarrow {R}$ be a differentiable function on ${S}$. Then, ${h}$ is said to be strongly $E$-invex (SEI) w.r.t. $ {\Psi}$ on $ {S}$, if
	\begin{eqnarray*}
		\nabla  {h}( {E}t)\ {\Psi}( {\alpha} s+ {E}s, {\alpha} t+ {E}t)^{T}\leq  {h}( {E}s)- {h}( {E}t),
	\end{eqnarray*}
	$\forall s,t\in  {S}$ and $ {\alpha}\in {[0,1]} $.
\end{definition}

   Iqbal et al. \cite{Hussain2} introduced the $Condition~{A}$ as follows: \\
 
     \noindent 
     $\mathbf{Condition}~\mathbf{A}$.\label{Condition A} Let $ E: S\rightarrow  S$ be an onto map and $S\subseteq R^{n}$ be a SEI set w.r.t. $\Psi$. Then, for every $s,t\in S, \alpha\in [0,1],~ \lambda\in [0,1], \exists ~~\bar{t}\in S$ s.t.  
     \begin{eqnarray*} E\bar{t}= \alpha t+Et+\lambda\Psi(\alpha s+Es,\alpha t+Et)\in  S.
     	\end{eqnarray*}
     $\Psi$ satisfies $Condition~ A$ if:  
     \begin{equation*}
     	{\bf A_{1}:~~}~~ {\Psi}\left({\alpha} t+ {E}t, {\alpha} \bar{t}+ {E}\bar{t})\right)=- {\lambda}\left( {\alpha}\bar{t}+ {\Psi}({\alpha} s+ {E}s, {\alpha} t+{E}t)\right), 
     \end{equation*}\begin{equation*}
     	\hspace{.5cm} {\bf A_{2}:~~}~~{\Psi}\left({\alpha}s+ {E}s,{\alpha} \bar{t}+{E}\bar{t})\right)=(1-{\lambda})\left({\alpha}\bar{t}+{\Psi}({\alpha}s+{E}s,{\alpha}t+{E}t)\right). 
     \end{equation*}
 
 \noindent
     For $ \alpha=0$ and $E$ is identity map,~$Condition~{A}$ reduces to the $Condition~{C}$ defined by Mohan et al. \cite{Neogy}. 
     
     \begin{theorem} \cite{Hussain2}\label{1T}
     	Let ${S}\subseteq  {R}^{n}$ be an open SEI set and  ${E}:{S}\rightarrow {S}$ be an onto map. If ${h}: {S}\rightarrow {R}$ is differentiable SEI function w.r.t. ${\Psi}$ on $ {S}$ and ${\Psi}$ satisfies {Condition}~{A}. Then, ${h}$ is SEP w.r.t. $ {\Psi}$ on ${S}$.
     \end{theorem}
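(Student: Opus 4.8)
The plan is to mimic the classical argument of Mohan and Neogy for preinvex functions, transplanted to the strongly $E$-invex setting. Fix $s,t\in S$, $\alpha\in[0,1]$ and $\lambda\in[0,1]$. By Condition~A there is a point $\bar t\in S$ with $E\bar t=\alpha t+Et+\lambda\Psi(\alpha s+Es,\alpha t+Et)$, so it suffices to prove $h(E\bar t)\le \lambda h(Es)+(1-\lambda)h(Et)$. The idea is to write down the differential SEI inequality of Definition~\ref{5P} at the auxiliary point $\bar t$, once against $s$ and once against $t$, and then eliminate the gradient term using the identities $A_1$ and $A_2$.

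First I would apply Definition~\ref{5P} with the pair $(s,\bar t)$ to obtain
\[
\nabla h(E\bar t)\,\Psi(\alpha s+Es,\alpha \bar t+E\bar t)^{T}\le h(Es)-h(E\bar t),
\]
and substitute $A_2$, namely $\Psi(\alpha s+Es,\alpha \bar t+E\bar t)=(1-\lambda)\bigl(\alpha \bar t+\Psi(\alpha s+Es,\alpha t+Et)\bigr)$. Next I would apply Definition~\ref{5P} with the pair $(t,\bar t)$ to obtain
\[
\nabla h(E\bar t)\,\Psi(\alpha t+Et,\alpha \bar t+E\bar t)^{T}\le h(Et)-h(E\bar t),
\]
and substitute $A_1$, namely $\Psi(\alpha t+Et,\alpha \bar t+E\bar t)=-\lambda\bigl(\alpha \bar t+\Psi(\alpha s+Es,\alpha t+Et)\bigr)$. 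Writing $v:=\alpha \bar t+\Psi(\alpha s+Es,\alpha t+Et)$, these reduce to $(1-\lambda)\,\nabla h(E\bar t)\,v^{T}\le h(Es)-h(E\bar t)$ and $-\lambda\,\nabla h(E\bar t)\,v^{T}\le h(Et)-h(E\bar t)$.

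Finally I would multiply the first of these by $\lambda\ge 0$ and the second by $(1-\lambda)\ge 0$ and add; the left-hand sides cancel exactly, leaving $0\le \lambda h(Es)+(1-\lambda)h(Et)-h(E\bar t)$, which is the desired SEP inequality once $E\bar t$ is replaced by $\alpha t+Et+\lambda\Psi(\alpha s+Es,\alpha t+Et)$. The main point to be careful about is the legitimacy of this bookkeeping: one must verify that $\bar t\in S$ so that Definition~\ref{5P} is applicable to the pairs $(s,\bar t)$ and $(t,\bar t)$ — this is precisely what Condition~A supplies — and one should dispose of the endpoint cases $\lambda=0$ and $\lambda=1$ separately, where the claimed inequality is immediate. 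I do not anticipate any genuine obstacle beyond this; the weight of the theorem is carried entirely by the algebraic identities $A_1$ and $A_2$, which are designed so that the gradient contributions telescope to zero.
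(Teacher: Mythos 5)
Your argument is correct, and it is the standard Mohan--Neogy-type argument that Condition~A is designed for: instantiate the differentiable SEI inequality at the auxiliary point $\bar t$ against $s$ and against $t$, use $A_2$ and $A_1$ to express both $\Psi$-terms as multiples of the common vector $v=\alpha\bar t+\Psi(\alpha s+Es,\alpha t+Et)$, and take the convex combination with weights $\lambda$ and $1-\lambda$ so the gradient terms cancel. Note that the present paper states this theorem without proof (it is quoted from the reference for strongly $E$-preinvex functions), so there is no in-paper argument to compare against; your proof is the expected one, and even the endpoint cases $\lambda=0,1$ need no separate treatment since the weighted sum already degenerates to the required inequality there.
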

    \section{\bf{Semi strongly $E$-preinvex (semi quasi strongly $E$-preinvex) functions }}\label{3S}
        Now, by the importance of generalized convexity, we define semi strongly $E$-preinvex and semi quasi strongly $E$-preinvex functions as follows: Throughout the paper $\Psi\colon R^{n}\times R^{n}\to R^{n}$ is a map.
         
         \begin{definition} \label{5} 
         	  A function $h:S\rightarrow R$ is said to be semi strongly $E$-preinvex (SSEP) w.r.t. $\Psi$ on SEI set $S$, if 
         	\begin{eqnarray*}h(\alpha t+Et+\lambda\Psi(\alpha s+Es,\alpha t+Et))\leq \lambda h(s)+(1-\lambda)h(t),
         	\end{eqnarray*} 
         $\forall~s,t \in S$, $\alpha \in[0,1],~\&~\lambda \in[0,1]$.\\
         
         \noindent 
         	If the inequality is strict and $\alpha s+Es\neq \alpha t+Et,~ \forall ~ s,t\in S$,~~$\alpha\in[0,1]$ and $\lambda\in(0,1)$,   $h$ is said to be strictly semi strongly $E$-preinvex (SSSEP).
         	 For $\alpha=0$, it is semi $E$-preinvex function defined by Syau \cite{Ru Yu}.
         	  \end{definition}
         	  
         	  \begin{example}
         	  	Let  $h:R\rightarrow R$ be defined as 
         	  	\begin{equation*}
         	  	h(s)=
         	  	\left\{\begin{array}{ll}
         	  	\sqrt{s},&\text{if}~s> 0,
         	  	
         	  	\\
         	  	
         	  	0,&\text{if}~s\leq0,

         	  	\end{array}\right.
         	  	\end{equation*}  
         	  	$E:R\rightarrow R$ be defined as 	$Es=0,~~\forall~ s\in R$ and $\Psi:R\times R\rightarrow R$ be  defined as \begin{equation*}
         	  	\Psi(s,t)=
         	  	\left\{\begin{array}{ll}
         	  	-t,&\text{if}~s\neq t, 
         	  	\\
         	  	
         	  	0,&\text{if}~s=t. 
         	  	\end{array}\right.\hspace*{.5cm} 
         	  	\end{equation*} 
         	  	The function $h$ is SSEP and semi $E$-preinvex  w.r.t. $\Psi$ but not SSEC. Particularly, at the points $s=1,t=2,\alpha=1~\&~\lambda=\dfrac{1}{2},$
         	  	\begin{eqnarray*}h(\lambda(\alpha s+Es)+(1-\lambda)(\alpha t+Et))=1.224
         	  		\end{eqnarray*} 
         	  	however,~~~
         	  	\hspace*{4.3cm} $\lambda h(s)+(1-\lambda)h(t) =1.207,$\\ which implies that  
         	  	\begin{eqnarray*} h(\lambda(\alpha s+Es)+(1-\lambda)(\alpha t+Et))\nleq \lambda h(s)+(1-\lambda)h(t).
         	  	\end{eqnarray*}
         	  \end{example}
         	  
         	  \noindent
         	  A semi $E$-preinvex function $h$ w.r.t. $\Psi$  need not be SSEP, as shown by the following example:
         	  \begin{example}
         	  		Let $h:R\rightarrow R$ be defined as
         	  
         	  		\begin{equation*}
         	  	\hspace*{1.2cm} 	h(s)=
         	  		\left\{\begin{array}{ll}
         	  		7, &\text{if}~s<1,~~ or~~ s>4,\\
         	  		
         	  		s-3,&\text{if}~1\leq s<2,\\
         	  		3-s, &\text{if}~2\leq s\leq 3,\\
         	  		
         	  		s-3,&\text{if}~ 3<s\leq 4,\\
         	  		\end{array}\right.
         	  		\end{equation*}  
         	  		$E:R\rightarrow R$ be defined as 	\begin{equation*}
         	  		\hspace*{2.4cm} E(s)=
         	  		\left\{\begin{array}{ll}
         	  		1,&\text{if}~1\leq s\leq 4,\\
         	  		1+\dfrac{2}{\pi}\arctan(1-s),&\text{if}~s<1,\\
         	  		
         	  		2+\dfrac{4}{\pi}\arctan(s-4),&\text{if}~s>4,\\
         	  		
         	  		\end{array}\right.
         	  		\end{equation*}   and  $\Psi:R\times R\rightarrow R$ be defined as
         	  		\begin{equation*} 
         	  		\Psi(s,t)=
         	  		\left\{\begin{array}{ll}
         	  		-t,&\text{if}~s\neq t, 
         	  		~~~~\\
         	  		
         	  		0, &\text{if}~ s=t. 
         	  		\end{array}\right.\hspace*{1.6cm}  
         	  		\end{equation*}
         	  			The function $h$ is a semi $E$-preinvex w.r.t. $\Psi$, but not SSEP. Particularly, at the points $s=1,t=4,\alpha=1$ and $\lambda=1$
         	  			\begin{eqnarray*}
         	  			h(\alpha t+Et+\lambda\Psi(\alpha s+Es,\alpha t+Et))&=&h(0)\\&=&7	 
         	  			\end{eqnarray*}
         	  			 
         	  			However,
         	  				 \begin{eqnarray*}
         	  				 	\hspace{3cm}\lambda h(s)+(1-\lambda)h(t)&=&h(1)\\&=&-2,
         	  				 \end{eqnarray*} 
         	  			\begin{eqnarray*}\implies h(\alpha t+Et+\lambda\Psi(\alpha s+Es,\alpha t+Et))\nleq \lambda h(s)+(1-\lambda)h(t).
         	  				\end{eqnarray*}
         	  		Hence,  $h$ is not SSEP.
         	  
         	  \end{example}
         	   \begin{definition}\label{6}
         	     A function $h:R^{n}\rightarrow R$ is said to be semi quasi strongly $E$-preinvex (SQSEP) w.r.t. $\Psi$ on SEI set $S$, if 
         	   	\begin{eqnarray*} h(\alpha t+Et+\lambda\Psi(\alpha s+Es,\alpha t+Et))\leq \max\{ h(s),h(t)\},
         	   		\end{eqnarray*}
            		$\forall s,t\in S,\alpha\in[0,1],~\lambda\in[0,1]$.\\  
         	   	If the inequality is strict and $h(s)\neq h(t)$, for every $s,t\in S$,~~$\forall~ \alpha\in[0,1]$ and $\lambda\in(0,1)$, then $h$ is said to be strictly semi quasi strongly $E$-preinvex (SSQSEP).
         	   	If $\alpha=0$, then it is semi $E$-prequasiinvex function   defined by Syau et al.\cite{Ru Yu}. 
         	   \end{definition}
         	    
         	   \begin{example}
         	   	Let $h:R\rightarrow R$ be defined as 
         	   	\begin{equation*}
         	   	h(s)=
         	   	\left\{\begin{array}{ll}
         	   	
         	   	1,&\text{if}~s>0, 
         	   	~~~~\\
         	   	
         	   	-s,&\text{if}~s\leq0,
         	   	\end{array}\right.\hspace*{0.2cm}  
         	   	\end{equation*} 
         	   	
         	   	\noindent
         	   	$E:R\rightarrow R$ be a map defined as 	$Es=0,~~~~\forall ~ s\in R$ and  $\Psi:R\times R\rightarrow R$ be defined as \begin{equation*}
         	   	\Psi(s,t)=
         	   	\left\{\begin{array}{ll}
         	   	-t,&\text{if}~s\neq t,\\
         	    
         	   	0,&\text{if}~s=t,\\
         	   	\end{array}\right.\hspace*{0.7cm}  
         	   	\end{equation*} 
         	   	The function $h$ is SQSEP and semi $E$-prequasiinvex  w.r.t. $\Psi$ but not QSEP. Particularly, at the points $u=0,v=1,\alpha=\dfrac{1}{2}$ and $\lambda=\dfrac{1}{2}$
         	   	\begin{eqnarray*}
         	   	h(\alpha t+Et+\lambda\Psi(\alpha s+Es,\alpha t+Et))&=&h\left(\dfrac{1}{4}\right)\\ &=&1	 
         	   	\end{eqnarray*}
         	    
         	   	However,
         	   		\hspace*{4.4cm}$\max\{h(Es),h(Et)\} =0,$  
         	   	\begin{eqnarray*}\implies h(\alpha t+Et+\lambda\Psi(\alpha s+Es,\alpha t+Et))\nleq \max\{h(Es),h(Et)\}.
         	   		\end{eqnarray*}
         	    
         	   	\end{example}
         	   
         	   \noindent
         	   A semi $E$-prequasiinvex w.r.t. $\Psi$  need not be SQSEP as shown in the following example:
         	   \begin{example}
         	   	Let $h:R\rightarrow R$ be a function defined as 
         	   	\begin{equation*}
         	   \hspace*{1cm} 	h(s)=
         	   	\left\{\begin{array}{ll}
         	   	7,&\text{if}~s<1,~~ or~~ s>4,\\
         	   	
         	   	s-3,&\text{if}~1\leq s<2,\\
         	   	3-s,&\text{if}~2\leq s\leq 3,\\
         	   	
         	   	s-3,&\text{if}~3<s\leq 4,\\
         	   	\end{array}\right.
         	   	\end{equation*}  
         	   	$E:R\rightarrow R$ be a map defined as 	
         	   	\begin{equation*}
         	   	\hspace*{2.3cm} E(s)=
         	   	\left\{\begin{array}{ll}
         	   	1,&\text{if}~1\leq s\leq 4,\\
         	   	1+\dfrac{2}{\pi}\arctan(1-s),&\text{if}~s<1,\\
         	   	
         	   	2+\dfrac{4}{\pi}\arctan(s-4),&\text{if}~s>4,
         	   	
         	   	\end{array}\right.
         	   	\end{equation*}   and  $\Psi:R\times R\rightarrow R$ be defined as
         	   	\begin{equation*} 
         	   	\Psi(s,t)=
         	   	\left\{\begin{array}{ll}
         	   	-t,&\text{if}~s\neq t,\\
         	   	~~~~\\
         	   	
         	   	0,&\text{if}~s=t.\\
         	   	\end{array}\right.\hspace*{1.8cm}  
         	   	\end{equation*}
         	   	The function $h$ is semi $E$-prequasiinvex w.r.t. $\Psi$, but not  SQSEP. Particularly, at the points $s=1,t=4,\alpha=1~\&~\lambda=1$
         	   	\begin{eqnarray*} h(\alpha t+Et+\lambda\Psi(\alpha s+Es,\alpha t+Et))=h(0)=7
         	   		\end{eqnarray*}
         	   	However,
         	   	\hspace*{6.1cm} $\max\{ h(s),h(t)\}=1,$\\  
         	   	\hspace*{0.5cm}$\implies h(\alpha t+Et+\lambda\Psi(\alpha s+Es,\alpha t+Et))\nleq \max\{h(s),h(t)\}.$
         	   	 
         	   \end{example}
                 \begin{theorem}\label{1a} 
         	  If $h:R^{n}\rightarrow R$ is SSEP (SQSEP) function w.r.t. $\Psi$ on SEI set $S$, then $ h(\alpha t+Et)\leq h(t)$, $\forall~t\in S,~ \alpha \in[0,1]$.  
         \end{theorem}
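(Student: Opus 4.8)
The plan is to specialize the defining inequality of semi strongly $E$-preinvexity to the degenerate parameter value $\lambda = 0$, together with the choice $s = t$, so that the interpolation point collapses onto $\alpha t + Et$. Since $\lambda = 0$ is admitted in Definition \ref{5} (respectively Definition \ref{6}), no extra hypotheses are needed; and because $h$ is defined on all of $R^{n}$ there is no concern about the argument lying in $S$ (though it does, by taking $\lambda = 0$ in the defining relation of the SEI set $S$).

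Concretely, I would fix $t \in S$ and $\alpha \in [0,1]$ and apply the SSEP inequality with this $t$, with $s = t$, and with $\lambda = 0$. The left-hand side then becomes
\begin{eqnarray*}
h\big(\alpha t + Et + 0\cdot\Psi(\alpha t + Et,\alpha t + Et)\big) = h(\alpha t + Et),
\end{eqnarray*}
while the right-hand side becomes $0\cdot h(t) + (1-0)\,h(t) = h(t)$, which yields $h(\alpha t + Et) \leq h(t)$. For the SQSEP case the very same substitution gives $h(\alpha t + Et) \leq \max\{h(t),h(t)\} = h(t)$, so the conclusion is identical.

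There is essentially no obstacle here; the only point worth flagging is that both definitions genuinely permit the endpoint $\lambda = 0$, so the substitution is legitimate. If one wished to avoid setting $s = t$, one could equally well take $\lambda = 0$ with an arbitrary $s \in S$, since the term $\lambda\Psi(\alpha s + Es,\alpha t + Et)$ vanishes irrespective of $s$, and the conclusion would be unchanged.
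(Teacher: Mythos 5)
Your proposal is correct and matches the paper's own argument: the paper likewise sets $\lambda=0$ in the defining inequality (with an arbitrary $s\in S$, which as you note makes no difference since the $\Psi$ term vanishes) to obtain $h(\alpha t+Et)\leq h(t)$ immediately.
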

         \begin{proof}
         	Recall that $h$ is SSEP  on a SEI set $S$, then $\forall~s,t\in S,\alpha \in[0,1]$ and $\lambda\in[0,1]$, we get
         	\begin{equation*}
         		\alpha t+Et+\lambda\Psi(\alpha s+Es,\alpha t+Et)\in S
         	\end{equation*} and
         	\begin{equation*}
         		h(\alpha t+Et+\lambda\Psi(\alpha s+Es,\alpha t+Et))\leq \lambda h(s)+(1-\lambda)h(t).
         	\end{equation*}
         	Thus, for $\lambda=0$, we have $ h(\alpha t+Et)\leq h(t),~ \forall~t\in S~\&~ \alpha \in[0,1].$   
         \end{proof}
         \begin{theorem}\label{2a} 
         	  If $h_{i}:R^{n}\rightarrow R$, $1\leq i\leq k$, are all SSEP (SQSEP) functions w.r.t. $\Psi$ on SEI set $S$, then the linear combination of SSEP (SQSEP) is also SSEP (SQSEP), $i.e.,$ for $a_{i}\geq0, 1\leq i\leq k$,  $h=\sum\limits_{i=1}^{k}a_{i} h_{i}$ is SSEP (SQSEP) on $S$.  
         \end{theorem}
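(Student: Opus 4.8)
The plan is to deduce both assertions directly from the defining inequalities of the $h_i$ by a pointwise argument, the only structural input being that $S$ is SEI. Fix $s,t\in S$, $\alpha\in[0,1]$, $\lambda\in[0,1]$ and set $z:=\alpha t+Et+\lambda\Psi(\alpha s+Es,\alpha t+Et)$; since $S$ is SEI we have $z\in S$, so every $h_i$---and hence $h=\sum_{i=1}^{k}a_ih_i$---is defined at $z$, at $s$ and at $t$. For the SSEP part, each $h_i$ gives $h_i(z)\le\lambda h_i(s)+(1-\lambda)h_i(t)$; multiplying the $i$-th inequality by $a_i\ge 0$ (which does not reverse it) and summing over $i$ yields
\[
h(z)=\sum_{i=1}^{k}a_ih_i(z)\le\lambda\sum_{i=1}^{k}a_ih_i(s)+(1-\lambda)\sum_{i=1}^{k}a_ih_i(t)=\lambda h(s)+(1-\lambda)h(t),
\]
which is exactly the SSEP inequality for $h$. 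The strictly semi strongly $E$-preinvex (SSSEP) variant follows by the same computation, provided at least one coefficient $a_{i_0}>0$ with $h_{i_0}$ strictly SSEP: when $\alpha s+Es\neq\alpha t+Et$ and $\lambda\in(0,1)$ that summand contributes a strict inequality while the rest contribute $\le$, so the sum is strict. This half of the theorem presents no difficulty.

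For the SQSEP part, the analogous summation only produces
\[
h(z)=\sum_{i=1}^{k}a_ih_i(z)\le\sum_{i=1}^{k}a_i\max\{h_i(s),h_i(t)\},
\]
and in general $\sum_{i=1}^{k}a_i\max\{h_i(s),h_i(t)\}\ge\max\bigl\{\sum_{i=1}^{k}a_ih_i(s),\ \sum_{i=1}^{k}a_ih_i(t)\bigr\}=\max\{h(s),h(t)\}$, i.e.\ the final inequality runs the wrong way and the chain does not close. I expect this to be the genuine obstacle: a nonnegative combination of quasi-type functions need not be quasi-type---already for $E=\mathrm{id}$ and $\Psi(s,t)=s-t$ this is the classical failure of quasiconvexity under addition. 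To salvage a valid SQSEP statement I would either restrict it to a single nonnegative scalar multiple $h=a h_1$, for which multiplication by $a\ge 0$ commutes with $\max$, or impose a compatibility hypothesis guaranteeing $\sum_{i}a_i\max\{h_i(s),h_i(t)\}=\max\{\sum_{i}a_ih_i(s),\sum_{i}a_ih_i(t)\}$ on $S$---for instance that one index realizes the maximum over $i$ simultaneously at $s$ and at $t$---and only then run the displayed chain. In brief: the SSEP assertion is immediate from summing the defining inequalities, whereas the SQSEP assertion is the delicate one and, as stated, appears to require an additional hypothesis.
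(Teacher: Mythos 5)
Your SSEP argument is exactly the paper's proof: fix $s,t,\alpha,\lambda$, use the SEI property of $S$ to place $z=\alpha t+Et+\lambda\Psi(\alpha s+Es,\alpha t+Et)$ in $S$, multiply each inequality $h_i(z)\le\lambda h_i(s)+(1-\lambda)h_i(t)$ by $a_i\ge 0$, and sum. So for that half you have reproduced the intended proof, and your remark on the strict variant (requiring some $a_{i_0}>0$) is a correct refinement the paper does not bother with. Your diagnosis of the SQSEP half is also well taken: the paper's proof silently treats only the SSEP case and never returns to the parenthetical SQSEP claim, and as you observe the natural summation only yields $h(z)\le\sum_i a_i\max\{h_i(s),h_i(t)\}$, which dominates $\max\{h(s),h(t)\}$ rather than being dominated by it. This is the classical failure of quasi-type properties under addition (already for $E=\mathrm{id}$, $\alpha=0$, $\Psi(s,t)=s-t$ a sum of quasiconvex functions need not be quasiconvex), so the SQSEP assertion is not established by the paper and, absent an extra hypothesis such as the one you propose (a common maximizing index at $s$ and $t$, or $k=1$), should not be expected to hold in the stated generality. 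In short: your proof is correct and coincides with the paper's where the paper has one, and you have correctly located a genuine gap in the theorem as stated rather than in your own argument.
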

         \begin{proof}
         		Recall that $h_{i}, 1\leq i\leq k$, are semi strongly $E$- preinvex   on $S$, then $\forall~s,t\in S$, $\alpha\in[0,1]~\&~\lambda\in[0,1]$, we get
         	\begin{eqnarray*}\alpha t+Et+\lambda\Psi(\alpha s+Es,\alpha t+Et)\in S
         		\end{eqnarray*} and
         	 \begin{eqnarray*}
         	 	h(\alpha t+Et+\lambda\Psi(\alpha s+Es,\alpha t+Et))&=&\sum\limits_{i=1}^{k}a_{i} h_{i}(\alpha t+Et+\lambda\Psi(\alpha s+Es,\alpha t+Et))\\
         		&\leq & \lambda\sum\limits_{i=1}^{k}a_{i} h_{i}(s)+(1-\lambda)  \sum\limits_{i=1}^{k}a_{i} h_{i}(t)\\
         		&=& \lambda h(s)+(1-\lambda)h(t).
         	\end{eqnarray*}
         	Hence, $h(s)$ is semi strongly $E$- preinvex on $S$.
         \end{proof}
         
          \begin{theorem}\label{3a} 
          	Let $\{h_{i}\}_{i\in I}$ be a collection of functions defined on SEI set $S$ s.t. $\sup\limits_{i\in I}h_{i}(u)$ exists in $R$, $\forall~ u\in S$. Let $h:S\rightarrow R$ be a function defined by $h(s)=\sup\limits_{i\in I}h_{i}(s),\forall~s\in S$. If the functions $h_{i}:S\rightarrow R$ , $\forall~i\in I$, are all SSEP (SQSEP) w.r.t. $\Psi$ on $S$, then $h$ is SSEP (SQSEP) on $S$.   
          \end{theorem}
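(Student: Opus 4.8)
The plan is to run the standard pointwise-supremum argument, exploiting that the membership relation defining an SEI set does not involve the functions $h_i$ at all, and that the SSEP / SQSEP inequalities can be bounded uniformly in $i\in I$ before passing to the supremum.

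First I would fix arbitrary $s,t\in S$, $\alpha\in[0,1]$ and $\lambda\in[0,1]$, and set $z:=\alpha t+Et+\lambda\Psi(\alpha s+Es,\alpha t+Et)$. Since $S$ is an SEI set w.r.t.\ $\Psi$, we have $z\in S$, so $h(z)=\sup_{i\in I}h_i(z)$ is well defined by hypothesis. Next, for the SSEP case, I would use that each $h_i$ is SSEP to get $h_i(z)\le \lambda h_i(s)+(1-\lambda)h_i(t)$, and then bound the right-hand side by $\lambda h(s)+(1-\lambda)h(t)$ using $h_i(s)\le h(s)$, $h_i(t)\le h(t)$ together with $\lambda,1-\lambda\ge 0$. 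Since this bound is independent of $i$, taking the supremum over $i\in I$ on the left yields $h(z)\le \lambda h(s)+(1-\lambda)h(t)$, which is exactly the SSEP inequality for $h$ at $(s,t,\alpha,\lambda)$; as these were arbitrary, $h$ is SSEP on $S$.

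For the SQSEP case the argument is identical except that each $h_i$ now satisfies $h_i(z)\le\max\{h_i(s),h_i(t)\}\le\max\{h(s),h(t)\}$, and the last quantity is again independent of $i$, so $h(z)=\sup_{i\in I}h_i(z)\le\max\{h(s),h(t)\}$. There is no serious obstacle here: the only point that needs care is that the intermediate upper bound be uniform in $i$ \emph{before} the supremum is taken (so that $\sup_i h_i(z)$ inherits the bound), and that the coefficients $\lambda$ and $1-\lambda$ are nonnegative so that the inequalities $h_i(s)\le h(s)$, $h_i(t)\le h(t)$ may be substituted without reversing the inequality. The existence of $\sup_{i\in I}h_i(u)$ in $R$ for every $u\in S$, assumed in the statement, is what guarantees $h$ is genuinely $R$-valued and that the final supremum step is legitimate.
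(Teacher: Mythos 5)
Your proposal is correct and follows essentially the same route as the paper: apply the SSEP (resp.\ SQSEP) inequality to each $h_i$ at the point $\alpha t+Et+\lambda\Psi(\alpha s+Es,\alpha t+Et)$ and pass to the supremum over $i$. The only cosmetic difference is that you bound the right-hand side uniformly by $\lambda h(s)+(1-\lambda)h(t)$ before taking the supremum, whereas the paper takes the supremum of both sides and then uses subadditivity of $\sup$; these are equivalent, and your version is if anything slightly cleaner about why the bound survives the supremum.
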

          \begin{proof}
          	Assume that the functions $h_{i}:S\rightarrow R$, $\forall i\in I$, are SSEP on $S$. Then, For every $s,t\in S$, $\alpha \in [0,1]$ and $\lambda \in[0,1]$, we get
          	\begin{eqnarray*}
          		h_{i}(\alpha t+Et+\lambda\Psi(\alpha s+Es,\alpha t+Et))&\leq& \lambda h_{i}(s)+(1-\lambda)h_{i}(t),\\
          		\sup\limits_{i\in I}h_{i}(\alpha t+Et+\lambda\Psi(\alpha s+Es,\alpha t+Et))&\leq& \sup\limits_{i\in I} \lambda h_{i}(s)+\sup\limits_{i\in I}(1-\lambda)h_{i}(t)\\
          		&=& \lambda \sup\limits_{i\in I}h_{i}(s)+(1-\lambda) \sup\limits_{i\in I}h_{i}(t)\\
          		h(\alpha t+Et+\lambda\Psi(\alpha s+Es,\alpha t+Et))&\leq& \lambda h(s)+(1-\lambda)h(t).
          	\end{eqnarray*} 
          	 
          \end{proof}
          \begin{theorem}\label{4a}
          	Let the function $h:R^{n}\rightarrow R$ be a SSEP (SQSEP) w.r.t. $\Psi$ on SEI set $S$. Let $g:R\rightarrow R$ be a positively homogeneous increasing function. Then, $g\circ h$ is SSEP (SQSEP) on $S$.
          \end{theorem}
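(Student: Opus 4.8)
The plan is to derive, in either alternative, the defining inequality for $g\circ h$ by starting from the one that $h$ already satisfies on the strongly $E$-invex set $S$ and then transporting it through the outer map $g$ using only monotonicity and positive homogeneity. Fix $s,t\in S$, $\alpha\in[0,1]$ and $\lambda\in[0,1]$, and abbreviate $z=\alpha t+Et+\lambda\Psi(\alpha s+Es,\alpha t+Et)$. Since $S$ is SEI with respect to $\Psi$ we have $z\in S$, so $g(h(z))$, $g(h(s))$ and $g(h(t))$ are all defined.

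I would treat the SQSEP alternative first, as it needs only monotonicity. If $h$ is SQSEP then $h(z)\le\max\{h(s),h(t)\}$; applying the increasing function $g$, and using that an increasing function commutes with the binary maximum,
\[
(g\circ h)(z)=g\big(h(z)\big)\le g\big(\max\{h(s),h(t)\}\big)=\max\{g(h(s)),g(h(t))\}=\max\{(g\circ h)(s),(g\circ h)(t)\},
\]
which is exactly the SQSEP inequality for $g\circ h$ on $S$.

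For the SSEP alternative, begin from $h(z)\le\lambda h(s)+(1-\lambda)h(t)$ and apply $g$ to get $(g\circ h)(z)\le g\big(\lambda h(s)+(1-\lambda)h(t)\big)$. Positive homogeneity now handles the scalar factors: since $\lambda,1-\lambda\ge 0$ we have $g(\lambda h(s))=\lambda g(h(s))$ and $g((1-\lambda)h(t))=(1-\lambda)g(h(t))$, so it remains only to move $g$ across the sum of the two summands $x:=\lambda h(s)$ and $y:=(1-\lambda)h(t)$, i.e.\ to secure $g(x+y)\le g(x)+g(y)$. Granting this, chaining the estimates gives $(g\circ h)(z)\le\lambda(g\circ h)(s)+(1-\lambda)(g\circ h)(t)$, as required.

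I expect this last subadditivity step to be the main obstacle: positive homogeneity only rescales, and monotonicity by itself does not force $g(x+y)\le g(x)+g(y)$ (a positively homogeneous increasing $g$ that is linear with a larger slope on the negative axis already violates it). The cleanest way to close the argument is to use that $g$, being positively homogeneous and convex, is sublinear, which yields subadditivity and in fact directly the Jensen-type bound $g(\lambda h(s)+(1-\lambda)h(t))\le\lambda g(h(s))+(1-\lambda)g(h(t))$; failing that, one restricts attention to the portion of the range of $h$ on which $g$ is additive. Once that point is settled, both alternatives are complete, and the corresponding strict versions follow by propagating strict inequalities through a strictly increasing $g$ whenever $\lambda\in(0,1)$ (and $\alpha s+Es\ne\alpha t+Et$, respectively $h(s)\ne h(t)$).
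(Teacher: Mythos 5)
Your route is the same as the paper's: apply the defining inequality of $h$, then push $g$ through. For the SQSEP alternative your argument is complete and matches what the paper intends (monotonicity of $g$ commutes with $\max$). For the SSEP alternative, the obstacle you isolate is genuine, and it is worth saying plainly that the paper's own proof does not address it: the paper simply writes
\[
g\bigl(\lambda h(s)+(1-\lambda)h(t)\bigr)=\lambda (g\circ h)(s)+(1-\lambda)(g\circ h)(t),
\]
i.e.\ it treats positive homogeneity as if it implied additivity across the sum $\lambda h(s)+(1-\lambda)h(t)$. As you observe, it does not. A positively homogeneous $g:R\to R$ has the form $g(x)=ax$ for $x\ge 0$ and $g(x)=cx$ for $x\le 0$ with $a=g(1)$, $c=-g(-1)$, and such a $g$ is increasing whenever $a,c\ge 0$; it is subadditive (equivalently convex, equivalently sublinear) precisely when $a\ge c$. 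Taking $a=1$, $c=2$, $h(s)=1$, $h(t)=-1$, $\lambda=\tfrac12$ gives $g(0)=0$ while $\lambda g(h(s))+(1-\lambda)g(h(t))=-\tfrac12$, so the Jensen-type bound the proof needs fails for an admissible $g$. Hence the SSEP half of the theorem requires an additional hypothesis --- convexity of $g$ (which together with positive homogeneity gives sublinearity and closes your argument exactly as you sketch), or a sign restriction confining the values of $h$ to a half-line where $g$ is linear. In short: your proposal is not a different proof but a more careful version of the paper's, and the step you could not justify is precisely the step the paper asserts without justification; it cannot be justified from the stated hypotheses alone.
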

          \begin{proof}
          	Suppose the function $h$ is SSEP on SEI set $S$.  For every $s,t\in S$, $\alpha\in[0,1]$ and $\lambda\in[0,1]$, we have\\
          \begin{eqnarray*}\alpha t+Et+\lambda\Psi(\alpha s+Es,\alpha t+Et)\in S,
          	\end{eqnarray*}  
          	and
          	\begin{eqnarray*}h(\alpha t+Et+\lambda\Psi(\alpha s+Es,\alpha t+Et))\leq\lambda h(s)+(1-\lambda)h(t).
          		\end{eqnarray*}
          	Since $g$ is a  positively homogeneous increasing function, we get
          	\begin{eqnarray*}
          		g\circ h(\alpha t+Et+\lambda\Psi(\alpha s+Es,\alpha t+Et))&\leq& g\circ(\lambda h(s)+(1-\lambda)h(t)\\
          		&=& \lambda(g\circ h)(s)+(1-\lambda)(g\circ h)(t).
          	\end{eqnarray*}
           \end{proof}

          We establish an important characterization of SEP and SSEP in the following theorem.
          \begin{theorem}\label{6TT}
          	  If $h:R^{n}\rightarrow R$ is SEP function w.r.t. $\Psi$ on SEI set $S$, then $h$ is SSEP on $S$ if and only if $h(Et)\leq h(t)$,~$\forall~t\in S$. 
          \end{theorem}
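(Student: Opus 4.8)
The plan is to prove the two implications separately, exploiting the fact that the only difference between the SEP inequality and the SSEP inequality is whether the right-hand side is evaluated at $Es,Et$ or at $s,t$; the bridge between the two is precisely the pointwise bound $h(Et)\le h(t)$.

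For the ``if'' direction I would assume that $h$ is SEP w.r.t.\ $\Psi$ on $S$ and that $h(Et)\le h(t)$ for every $t\in S$. Fix $s,t\in S$, $\alpha\in[0,1]$ and $\lambda\in[0,1]$. Since $S$ is a SEI set, $\alpha t+Et+\lambda\Psi(\alpha s+Es,\alpha t+Et)\in S$, and by SEP
\[
h(\alpha t+Et+\lambda\Psi(\alpha s+Es,\alpha t+Et))\le \lambda h(Es)+(1-\lambda)h(Et).
\]
Because $\lambda\ge 0$ and $1-\lambda\ge 0$, and because $h(Es)\le h(s)$ and $h(Et)\le h(t)$ by hypothesis, the right-hand side is $\le \lambda h(s)+(1-\lambda)h(t)$. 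Chaining the two estimates gives exactly the SSEP inequality, so $h$ is SSEP on $S$.

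For the ``only if'' direction assume $h$ is SSEP on $S$ (the SEP hypothesis is not needed here). Since $S$ is a SEI set and $h$ is SSEP, Theorem \ref{1a} yields $h(\alpha t+Et)\le h(t)$ for all $t\in S$ and all $\alpha\in[0,1]$; taking $\alpha=0$ gives $h(Et)\le h(t)$ for every $t\in S$, as required.

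The argument is a short chain of inequalities in each direction, so I do not expect a genuine obstacle; the only point requiring care is that the passage from $\lambda h(Es)+(1-\lambda)h(Et)$ to $\lambda h(s)+(1-\lambda)h(t)$ uses the nonnegativity of the coefficients $\lambda$ and $1-\lambda$, which holds since $\lambda\in[0,1]$. It is also worth recording explicitly that the necessity part is immediate from Theorem \ref{1a} specialized to $\alpha=0$, rather than re-deriving it from the definition of SSEP.
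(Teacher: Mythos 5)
Your proof is correct and follows essentially the same route as the paper: the forward direction chains the SEP inequality with $h(Es)\le h(s)$, $h(Et)\le h(t)$ using the nonnegativity of $\lambda$ and $1-\lambda$, and the converse invokes Theorem \ref{1a} with $\alpha=0$, exactly as the paper does. Your write-up is in fact slightly more careful than the paper's in noting explicitly where the nonnegativity of the coefficients is used.
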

          \begin{proof}
          		Recall that $h$ is strongly $E$-preinvex function on $S$, then  $\forall~s,t\in S,\alpha\in[0,1]$ and $\lambda\in[0,1]$, we get 
          	\begin{eqnarray*}h(\alpha t+Et+\lambda\Psi(\alpha s+Es,\alpha t+Et))\leq \lambda h(Es)+(1-\lambda)h(Et).
          		\end{eqnarray*}
          	Since $h(Et)\leq h(t)$, for each $t\in S$, we have 
          	\begin{eqnarray*}h(\alpha t+Et+\lambda\Psi(\alpha s+Es,\alpha t+Et))\leq \lambda h(s)+(1-\lambda)h(t).
          	\end{eqnarray*}
          	Hence, $h$ is SSEP on $S$.\\
          	Converse part can be proved by taking $\alpha=0$ in the Theorem \ref{1a}. 
          \end{proof}
      \begin{theorem}\label{6T}
       If $h:R^{n}\rightarrow R$ is SSEP function w.r.t. $\Psi$ on SEI set $S$ and $h(s)\leq h(t),\forall s,t\in S$, then $h$ is SQSEP on $S$.  
      \end{theorem}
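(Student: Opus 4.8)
The plan is to derive the semi quasi strong $E$-preinvexity inequality directly from the semi strong $E$-preinvexity inequality, by replacing the affine (convex-combination) upper bound with the pointwise maximum of $h(s)$ and $h(t)$. No new machinery is needed; everything follows from the defining inequality of SSEP together with an elementary estimate on convex combinations.

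First I would invoke the hypothesis: since $h$ is SSEP w.r.t. $\Psi$ on the SEI set $S$, for every $s,t\in S$, $\alpha\in[0,1]$ and $\lambda\in[0,1]$ the point $\alpha t+Et+\lambda\Psi(\alpha s+Es,\alpha t+Et)$ lies in $S$ and
\[
h\bigl(\alpha t+Et+\lambda\Psi(\alpha s+Es,\alpha t+Et)\bigr)\le \lambda h(s)+(1-\lambda)h(t).
\]
Next I would bound the right-hand side. Using the assumption $h(s)\le h(t)$ for all $s,t\in S$, one has $\lambda h(s)+(1-\lambda)h(t)\le \lambda h(t)+(1-\lambda)h(t)=h(t)=\max\{h(s),h(t)\}$; more generally this is just the fact that a convex combination of two real numbers never exceeds their maximum. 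Chaining the two estimates yields
\[
h\bigl(\alpha t+Et+\lambda\Psi(\alpha s+Es,\alpha t+Et)\bigr)\le \max\{h(s),h(t)\}
\]
for all admissible $s,t,\alpha,\lambda$, which is precisely the defining inequality of SQSEP on $S$. Hence $h$ is SQSEP on $S$, and the proof is complete.

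I do not expect any real obstacle here: the argument is a two-line estimate. The only points that deserve attention are (i) citing the SEI property of $S$ so that $h$ is genuinely evaluated at a point of $S$, as the definition of SQSEP requires, and (ii) recording that the monotonicity hypothesis $h(s)\le h(t)$ is exactly what permits passing from the affine bound $\lambda h(s)+(1-\lambda)h(t)$ to the bound $\max\{h(s),h(t)\}$ — although, as noted, that replacement is in fact valid for any convex combination irrespective of the hypothesis, so the hypothesis only streamlines the presentation.
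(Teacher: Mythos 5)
Your proof is correct and follows the same (only) route the paper intends: the paper's proof is just the one-line remark ``By the Definition \ref{5}, the proof is obvious,'' and you have filled in exactly the intended two-step estimate, chaining the SSEP inequality with the elementary bound $\lambda h(s)+(1-\lambda)h(t)\le\max\{h(s),h(t)\}$. Your side observation that the hypothesis $h(s)\le h(t)$ for all $s,t\in S$ is never actually needed (and indeed would force $h$ to be constant on $S$) is accurate and worth keeping.
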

      \begin{proof}
      	By the Definition \ref{5}, the proof is obvious.
      \end{proof}
     \begin{theorem}
   If $h:R^{n}\rightarrow R$ is quasi strongly $E$-preinvex function w.r.t. $\Psi$ on SEI set $S$ and $h(Et)\leq h(t),\forall~ t\in S$, then $h$ is SQSEP on $S$.  
  \end{theorem}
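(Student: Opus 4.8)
The plan is to mimic the forward direction of Theorem \ref{6TT}, replacing the SEP/SSEP inequalities by their ``quasi'' analogues. The whole argument rests on the elementary monotonicity of the $\max$ operation together with the hypothesis $h(Eu)\le h(u)$ for all $u\in S$.

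First I would invoke the definition of quasi strongly $E$-preinvexity for $h$: for all $s,t\in S$, $\alpha\in[0,1]$ and $\lambda\in[0,1]$, the point $\alpha t+Et+\lambda\Psi(\alpha s+Es,\alpha t+Et)$ lies in $S$ (since $S$ is SEI) and
\begin{eqnarray*}
h(\alpha t+Et+\lambda\Psi(\alpha s+Es,\alpha t+Et))\le \max\{h(Es),h(Et)\}.
\end{eqnarray*}
Next, I would apply the hypothesis at the two points $s$ and $t$, giving $h(Es)\le h(s)$ and $h(Et)\le h(t)$, so that $\max\{h(Es),h(Et)\}\le\max\{h(s),h(t)\}$. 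Chaining the two inequalities yields
\begin{eqnarray*}
h(\alpha t+Et+\lambda\Psi(\alpha s+Es,\alpha t+Et))\le \max\{h(s),h(t)\}
\end{eqnarray*}
for all $s,t\in S$, $\alpha\in[0,1]$, $\lambda\in[0,1]$, which is exactly the defining inequality of SQSEP in Definition \ref{6}; hence $h$ is SQSEP on $S$.

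There is essentially no obstacle here: the result is a direct consequence of the definitions and the order-preservation of $\max$, so the only thing to be careful about is quoting the membership $\alpha t+Et+\lambda\Psi(\alpha s+Es,\alpha t+Et)\in S$ from the SEI property of $S$ before writing the functional inequality, and making sure the hypothesis $h(Et)\le h(t)$ is used for both arguments of the $\max$. If one also wanted a strict version, the same substitution would turn SQSEP (strict) hypotheses plus $h(Eu)\le h(u)$ into SSQSEP, but that is not claimed in the statement.
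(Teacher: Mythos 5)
Your proof is correct and is exactly the argument the paper leaves implicit: the paper's own ``proof'' is just the one-line remark that the result is obvious from Definition \ref{6}, and your chain $h(\alpha t+Et+\lambda\Psi(\alpha s+Es,\alpha t+Et))\le\max\{h(Es),h(Et)\}\le\max\{h(s),h(t)\}$, using the monotonicity of $\max$ together with $h(Es)\le h(s)$ and $h(Et)\le h(t)$, is precisely the intended filling-in of that gap. Nothing further is needed.
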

  \begin{proof}
  	By the Definition \ref{6}, the proof is  obvious.
  \end{proof}
      Now, we define semi pseudo strongly $E$-preinvex function on SEI set as follows:
      \begin{definition}
             A function $h:R^{n}\rightarrow R$ is said to be semi pseudo strongly $E$-preinvex (SPSEP) w.r.t. $\Psi$ on SEI set $S$, if $\exists$ a strictly positive function $b:R^{n}\times R^{n}\rightarrow R$ s.t. $h(s)< h(t)~\implies$ 
             \begin{eqnarray*}h(\alpha t+Et+\lambda\Psi(\alpha s+Es,\alpha t+Et))\leq h(t)+\lambda(\lambda-1)b(s,t),
             	\end{eqnarray*} $\forall~ s,t \in S,~\alpha\in[0,1]~\&~\lambda \in[0,1].$
           \end{definition}
           In the next theorem, we discuss an important relationship between SSEP and SPSEP.
           \begin{theorem}\label{8T} 
           	Let the function $h:S\subseteq R^{n}\rightarrow R$ be SSEP on SEI set $S$. Then, $h$ is semi pseudo strongly $E$-preinvex on $S$.
           \end{theorem}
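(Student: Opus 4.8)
The plan is to exhibit an explicit strictly positive function $b\colon R^{n}\times R^{n}\to R$ and then to verify the defining implication of SPSEP directly from the SSEP inequality, using nothing more than elementary sign manipulations. So I would fix $s,t\in S$, $\alpha\in[0,1]$, $\lambda\in[0,1]$, and assume $h(s)<h(t)$; the task is to bound $h(\alpha t+Et+\lambda\Psi(\alpha s+Es,\alpha t+Et))$ by $h(t)+\lambda(\lambda-1)b(s,t)$.

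First I would record the SSEP estimate in the convenient form
\begin{eqnarray*}
h(\alpha t+Et+\lambda\Psi(\alpha s+Es,\alpha t+Et))\leq \lambda h(s)+(1-\lambda)h(t)=h(t)+\lambda\big(h(s)-h(t)\big),
\end{eqnarray*}
so that it suffices to choose $b$ with $\lambda\big(h(s)-h(t)\big)\leq \lambda(\lambda-1)\,b(s,t)$ whenever $h(s)<h(t)$. The natural candidate is $b(s,t)=h(t)-h(s)$ on the region $\{h(s)<h(t)\}$: writing $d=h(t)-h(s)>0$, the required inequality becomes $-\lambda d\leq(\lambda^{2}-\lambda)d$, i.e. (dividing by $d>0$) $0\leq\lambda^{2}$, which is trivially true. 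To meet the letter of the definition, which asks for one function that is strictly positive on all of $R^{n}\times R^{n}$, I would extend the formula by setting $b(s,t)=1$ when $h(s)\geq h(t)$. Chaining the displayed estimate with the inequality just verified then yields $h(\alpha t+Et+\lambda\Psi(\alpha s+Es,\alpha t+Et))\leq h(t)+\lambda(\lambda-1)b(s,t)$, which is exactly the SPSEP inequality, and $b>0$ everywhere, so $h$ is SPSEP on $S$.

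The only point needing care is the mismatch between the ``natural'' choice $b(s,t)=h(t)-h(s)$, which is positive only where $h(s)<h(t)$, and the global positivity demanded by the definition; the piecewise extension resolves this harmlessly, since the SPSEP implication is tested only on $\{h(s)<h(t)\}$. Apart from that I do not foresee any obstacle: the SEI-set hypothesis on $S$ is used only through the membership $\alpha t+Et+\lambda\Psi(\alpha s+Es,\alpha t+Et)\in S$, which is already built into the assumption that $h$ is SSEP.
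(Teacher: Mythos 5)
Your proposal is correct and follows essentially the same route as the paper: apply the SSEP inequality, rewrite the right-hand side as $h(t)+\lambda\big(h(s)-h(t)\big)$, and take $b(s,t)=h(t)-h(s)$, the key step being the elementary inequality $\lambda\big(h(s)-h(t)\big)\leq\lambda(\lambda-1)\big(h(t)-h(s)\big)$ when $h(s)<h(t)$. Your remark about extending $b$ to be strictly positive off the set $\{h(s)<h(t)\}$ is a small point of care that the paper's proof glosses over, but it does not change the argument.
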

           \begin{proof}
           	Let $h(s)<h(t)$. Recall that $h$ is semi strongly $E$-preinvex on SEI set $S$. For any $s,t\in S, ~\alpha\in[0,1]~\&~\lambda\in[0,1]$, we get
           	\begin{eqnarray*}
           		h(\alpha t+Et+\lambda\Psi(\alpha s+Es,\alpha t+Et))&\leq& \lambda h(s)+(1-\lambda)h(t)\\ 
           		&=&  h(t)+\lambda(h(s)-h(t))\\
           		&\leq& h(t)+\lambda(1-\lambda)(h(s)-h(t))\\
           		&=&h(t)+\lambda(\lambda-1)(h(t)-h(s))\\
           		&=&h(t)+\lambda(\lambda-1)b(s,t),
           	\end{eqnarray*}
           	
           	\noindent
           	where $b(s,t)=h(t)-h(s)>0$. Therefore, the function $h$ is semi pseudo strongly $E$-preinvex on $S$.
           \end{proof} 
       \begin{definition}
       	Let $S\subseteq R^{n}$ be a SEI set and $h:S\rightarrow R$ be a real valued function. At $r\in R$, the lower level set of $h$ is defined as 
       	\begin{eqnarray*}K_{r}=\{s\in S:h(s)\leq r\}.
       	\end{eqnarray*}
       \end{definition}
       We examine an important characterization of a SQSEP function in terms of its lower level set in the following theorems.
        \begin{theorem}
      	If the lower level set $K_{r}$ is SEI, for any $r\in R$, then the function $h:R^{n}\rightarrow R$ is SQSEP w.r.t. $\Psi$ on SEI set $S$.
       	\end{theorem}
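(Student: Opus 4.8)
The plan is to obtain the defining inequality of semi quasi strong $E$-preinvexity directly from the membership property of a strongly $E$-invex set, specialising the ``for any $r$'' hypothesis to a single, well-chosen level.

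First I would fix arbitrary $s,t\in S$, $\alpha\in[0,1]$ and $\lambda\in[0,1]$ and put $r:=\max\{h(s),h(t)\}$. Since $h(s)\le r$ and $h(t)\le r$, both $s$ and $t$ belong to the lower level set $K_{r}=\{u\in S:h(u)\le r\}$; in particular $K_{r}$ is a nonempty subset of $S$. By hypothesis $K_{r}$ is a strongly $E$-invex set w.r.t. $\Psi$, so Definition \ref{Defn 1}, applied to the pair $s,t$ with the chosen $\alpha,\lambda$, yields
\begin{eqnarray*}
\alpha t+Et+\lambda\Psi(\alpha s+Es,\alpha t+Et)\in K_{r}.
\end{eqnarray*}

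Unwinding the definition of $K_{r}$, the displayed membership says exactly that
\begin{eqnarray*}
h\big(\alpha t+Et+\lambda\Psi(\alpha s+Es,\alpha t+Et)\big)\le r=\max\{h(s),h(t)\}.
\end{eqnarray*}
Because $s,t,\alpha,\lambda$ were arbitrary, this is precisely the inequality in Definition \ref{6}, and therefore $h$ is SQSEP w.r.t. $\Psi$ on $S$.

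As for the hard part: there is essentially no computational content here — the argument is a two-line unwinding of definitions. The only point that needs care is the logical step of instantiating the hypothesis at the specific value $r=\max\{h(s),h(t)\}$, which is what makes $s$ and $t$ simultaneously admissible members of a single level set; once that is noticed, the conclusion is immediate. (It is also worth recording that the same underlying map $E$ is understood throughout, so that ``strongly $E$-invex'' for $K_{r}$ and for $S$ refer to the same structure.)
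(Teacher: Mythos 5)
Your proof is correct and takes essentially the same route as the paper: the paper's own argument (in its ``conversely'' half, which is the part that actually establishes the stated implication) likewise fixes $s,t\in S$, sets $r=\max\{h(s),h(t)\}$ so that $s,t\in K_{r}$, invokes the strong $E$-invexity of $K_{r}$ to place $\alpha t+Et+\lambda\Psi(\alpha s+Es,\alpha t+Et)$ in $K_{r}$, and unwinds the definition of the level set. No gaps; your remark about instantiating the hypothesis at that particular $r$ is exactly the one nontrivial step.
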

       \begin{proof}
       	Assume that the function $h$ is SQSEP on SEI set $S$. For every $s,t\in K_{r}$,~$\alpha\in[0,1]~\&~\lambda\in[0,1],$ it follows that $h(s)\leq r,~h(t)\leq r$, we get
       	\begin{eqnarray*}\alpha t+Et+\lambda\Psi(\alpha s+Es,\alpha t+Et)\in S,
       		\end{eqnarray*}
       	and
       	\begin{eqnarray*}h(\alpha t+Et+\lambda\Psi(\alpha s+Es,\alpha t+Et))\leq \max\{h(s),h(t)\},
       		\end{eqnarray*}
      	which implies that
       	\begin{eqnarray*}\alpha t+Et+\lambda\Psi(\alpha s+Es,\alpha t+Et)\in K_{r}.
       		\end{eqnarray*}
       	Hence, $K_{r}$ is a SEI set.\\
       	Conversely, assume that  for every $r\in R$, the set $K_{r}$ and $S$ are SEI sets. For every $s,t\in S,~\alpha\in[0,1]~\&~\lambda\in[0,1]$, we get 
       	\begin{eqnarray*}\alpha t+Et+\lambda\Psi(\alpha s+Es,\alpha t+Et)\in S.
       		\end{eqnarray*}
       	Let $s,t\in K_{r}$ and $r=\max\{h(s),h(t)\}$.
       	Since $K_{r}$ is SEI set, then 
       	\begin{eqnarray*}\alpha t+Et+\lambda\Psi(\alpha s+Es,\alpha t+Et)\in K_{r},
       		\end{eqnarray*}
       	and
       	\begin{eqnarray*}h(\alpha t+Et+\lambda\Psi(\alpha s+Es,\alpha t+Et))\leq r=\max\{h(s),h(t)\}.
       		\end{eqnarray*}
       	Hence, $h$ is SQSEP on $S$.
       \end{proof}  
   In the following theorem, we show the relationship between SSEP and lower level set.  	
         \begin{theorem}
         	 If a function $h:R^{n}\rightarrow R$ is SSEP  w.r.t. $\Psi$ on a SEI set $S$, then the lower level set $K_{r}$ is SEI, $\forall r\in R$.
         \end{theorem}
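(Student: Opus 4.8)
The plan is to argue directly from the two defining properties in hand: the set-membership condition of a SEI set (Definition \ref{Defn 1}) and the functional inequality of a SSEP function (Definition \ref{5}). First I would fix an arbitrary $r\in R$. If $K_{r}=\emptyset$ it is vacuously SEI, so assume $K_{r}\neq\emptyset$ and pick arbitrary $s,t\in K_{r}$, $\alpha\in[0,1]$, $\lambda\in[0,1]$; by the definition of the lower level set we then have $s,t\in S$ with $h(s)\leq r$ and $h(t)\leq r$.

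Next, since $S$ is a SEI set w.r.t. $\Psi$, applying Definition \ref{Defn 1} to the points $s,t\in S$ gives
\[
z:=\alpha t+Et+\lambda\Psi(\alpha s+Es,\alpha t+Et)\in S .
\]
Then I would invoke the hypothesis that $h$ is SSEP w.r.t. $\Psi$ on $S$ at these same $s,t,\alpha,\lambda$ to estimate $h(z)$:
\[
h(z)\leq \lambda h(s)+(1-\lambda)h(t)\leq \lambda r+(1-\lambda)r=r ,
\]
where the last equality uses that the convex combination of $r$ with itself is $r$, for any $\lambda$. Hence $z\in K_{r}$, which is precisely the membership required by Definition \ref{Defn 1} for $K_{r}$ to be SEI w.r.t. $\Psi$. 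Since $s,t,\alpha,\lambda$ were arbitrary (and $r$ arbitrary), $K_{r}$ is SEI for every $r\in R$, as claimed.

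I do not anticipate a genuine obstacle: the proof is a one-line chaining of the SEI-set condition with the SSEP inequality. The only points worth stating explicitly are the trivial case $K_{r}=\emptyset$ and the observation that no additional assumption on $E$ (such as $E(K_{r})\subseteq K_{r}$) is needed, because Definition \ref{Defn 1} constrains only the compound point $\alpha t+Et+\lambda\Psi(\alpha s+Es,\alpha t+Et)$ and not $Es$ or $Et$ individually. If desired, one can also remark that the $\lambda=0$ instance recovers $h(\alpha t+Et)\leq h(t)$ of Theorem \ref{1a}, consistent with $K_{r}$ being closed under the $E$-invex combination.
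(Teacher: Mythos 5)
Your argument is correct and follows essentially the same route as the paper's own proof: take $s,t\in K_{r}$, use the SEI property of $S$ to place the point $\alpha t+Et+\lambda\Psi(\alpha s+Es,\alpha t+Et)$ in $S$, then apply the SSEP inequality to bound $h$ at that point by $\lambda h(s)+(1-\lambda)h(t)\leq r$. The only additions beyond the paper's version are the harmless remarks about the empty case and the role of $E$, which do not change the substance.
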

         \begin{proof}
         	For any $s,t\in K_{r},~ \alpha\in[0,1]~\&~\lambda\in[0,1]$, it follows that $h(s)\leq r$ and $h(t)\leq r$. Recall that $h$ is semi strongly $E$-preinvex w.r.t. $\Psi$ on SEI set $S$. Therefore, we have
         	\begin{eqnarray*}\alpha t+Et+\lambda\Psi(\alpha s+Es,\alpha t+Et)\in S,
         		\end{eqnarray*} and
         	\begin{eqnarray*}
         		h(\alpha t+Et+\lambda\Psi(\alpha s+Es,\alpha t+Et)) &\leq & \lambda h(s)+(1-\lambda)h(t)
         	\\	&\leq& r.
         	\end{eqnarray*}
         	 
         	$ \implies \alpha t+Et+\lambda\Psi(\alpha s+Es,\alpha t+Et)\in K_{r}.$ Hence, the lower level set $K_{r}$ is SEI.   
         \end{proof}
         \begin{definition}
         	Let $G\subseteq R^{n}\times R$ and $E:R^{n}\rightarrow R^{n}$ be a map. A set $G$ is said to be strongly $G$-invex, if for any $(s,p),~~~(t,q)\in G,~~\alpha\in[0,1]$ and $\lambda\in[0,1]$, we have
         	\begin{eqnarray*}(\alpha t+Et+\lambda\Psi(\alpha s+Es,\alpha t+Et),~\lambda p+(1-\lambda)q)\in G.
         		\end{eqnarray*}
         	 \end{definition}

         	An $epi(h)$ of $h$ is defined as:
         	
         	\begin{eqnarray*}epi(h)=\{(s,\gamma):h(s)\leq\gamma\}.
         	\end{eqnarray*}
         We consider an important characteristic of a SSEP  function in terms of its $epi(h)$ under this theorem.
         \begin{theorem} \label{20TT} 
         	    A function $h:R^{n}\rightarrow R$ is SSEP  w.r.t. $\Psi$ on SEI set $S\iff$ the epi(h) is strongly $G$-invex set on $S\times R$.
         \end{theorem}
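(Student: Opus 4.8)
The plan is to prove the two implications separately, in each case simply unwinding the definition of the epigraph $epi(h)=\{(s,\gamma):h(s)\le\gamma\}$ against the definition of a strongly $G$-invex set on $S\times R$.

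For the forward implication, I would assume $h$ is SSEP on the SEI set $S$ and pick arbitrary points $(s,p),(t,q)\in epi(h)$, so that $h(s)\le p$ and $h(t)\le q$, together with fixed $\alpha,\lambda\in[0,1]$. Since $S$ is SEI, the point $\alpha t+Et+\lambda\Psi(\alpha s+Es,\alpha t+Et)$ lies in $S$, so $h$ is legitimately evaluated there; applying the SSEP inequality and then the bounds $h(s)\le p$, $h(t)\le q$ gives
\[
h\big(\alpha t+Et+\lambda\Psi(\alpha s+Es,\alpha t+Et)\big)\le \lambda h(s)+(1-\lambda)h(t)\le \lambda p+(1-\lambda)q,
\]
which is exactly the assertion that $\big(\alpha t+Et+\lambda\Psi(\alpha s+Es,\alpha t+Et),\,\lambda p+(1-\lambda)q\big)\in epi(h)$. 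Hence $epi(h)$ is strongly $G$-invex on $S\times R$.

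For the converse, I would assume $epi(h)$ is strongly $G$-invex and, given $s,t\in S$, observe that $(s,h(s))$ and $(t,h(t))$ lie trivially in $epi(h)$ since $h(s)\le h(s)$ and $h(t)\le h(t)$. Applying strong $G$-invexity with $p=h(s)$, $q=h(t)$ and arbitrary $\alpha,\lambda\in[0,1]$ yields
\[
\big(\alpha t+Et+\lambda\Psi(\alpha s+Es,\alpha t+Et),\,\lambda h(s)+(1-\lambda)h(t)\big)\in epi(h),
\]
and reading off the epigraph condition on the second coordinate gives precisely
\[
h\big(\alpha t+Et+\lambda\Psi(\alpha s+Es,\alpha t+Et)\big)\le \lambda h(s)+(1-\lambda)h(t),
\]
so $h$ is SSEP on $S$.

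The argument is essentially bookkeeping; the only point needing a little care is making sure the first coordinate $\alpha t+Et+\lambda\Psi(\alpha s+Es,\alpha t+Et)$ genuinely lies in $S$, so that evaluating $h$ there and asserting membership in $epi(h)$ both make sense. This is guaranteed by the hypothesis that $S$ is an SEI set, and it is also implicitly built into the way the strongly $G$-invex condition on $S\times R$ is used. I do not expect any deeper obstacle.
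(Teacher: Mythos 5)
Your proposal is correct and follows essentially the same route as the paper's own proof: the forward direction applies the SSEP inequality and the bounds $h(s)\le p$, $h(t)\le q$ to land in $epi(h)$, and the converse specializes the strong $G$-invexity of $epi(h)$ to the points $(s,h(s))$ and $(t,h(t))$. Your explicit attention to why the first coordinate lies in $S$ is a small point of care the paper also invokes (via the SEI hypothesis), so there is nothing to add.
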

         \begin{proof}
          Let $(s,p),~(t,q)\in epi(h)$, it follows that $h(s)\leq p,~h(t)\leq q$~~$,\alpha\in[0,1]$ and $\lambda\in[0,1]$. Recall that $h:R^{n}\rightarrow R$ is SSEP on SEI set $S$. Therefore, we have
          	\begin{eqnarray*}\alpha t+Et+\lambda\Psi(\alpha s+Es,\alpha t+Et)\in S,
          		\end{eqnarray*} and
          \begin{eqnarray*}
           h(\alpha t+Et+\lambda\Psi(\alpha s+Es,\alpha t+Et))&\leq& \lambda h(s)+(1-\lambda)h(t)\\
           &\leq & \lambda p+(1-\lambda)q.
          \end{eqnarray*}
         \begin{eqnarray*}\implies(\alpha t+Et+\lambda\Psi(\alpha s+Es,\alpha t+Et),~\lambda p+(1-\lambda)q)\in epi(h).
         	\end{eqnarray*}
         	Thus, $epi(h)$ is strongly $G$-invex set on $S\times R$.\\
         	Conversely, assume that $epi(h)$ is strongly $G$-invex set. Let $s,t\in S, \alpha\in[0,1]$ and $\lambda\in[0,1]$, then $(s,h(s))\in epi(h)$ and $(t,h(t))\in epi(h)$. Recall that $epi(h)$ is strongly $G$-invex on $S\times R$, we get 
         	\begin{eqnarray*}(\alpha t+Et+\lambda\Psi(\alpha s+Es,\alpha t+Et),~\lambda h(s)+(1-\lambda)h(t))\in epi(h),
         		\end{eqnarray*} 
         	which implies that
         	\begin{eqnarray*}h(\alpha t+Et+\lambda\Psi(\alpha s+Es,\alpha t+Et))\leq \lambda h(s)+(1-\lambda)h(t).
         		\end{eqnarray*}
         	Hence, $h$ is SSEP  on $S$.    
          \end{proof}
          \begin{theorem}
             Suppose that $\{h_{i}\}_{i\in I}$ is a family of SSEP  and bounded above on SEI set $S$. Then,   $h(s)=\sup\limits_{i\in I}h_{i}(s)$ is semi strongly $E$- preinvex on $S$.	 
          \end{theorem}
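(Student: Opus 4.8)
The plan is to obtain this as a direct consequence of Theorem \ref{3a}, with the only new observation being that the hypothesis ``bounded above'' is exactly what guarantees that $\sup_{i\in I}h_i(u)$ exists in $R$ for every $u\in S$, so that $h(s)=\sup_{i\in I}h_i(s)$ is a well-defined real-valued function on $S$ and the standing assumption of Theorem \ref{3a} is fulfilled. Once that is noted, the conclusion is immediate from Theorem \ref{3a}. Nevertheless, I would include the short self-contained argument for completeness.

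First I would fix arbitrary $s,t\in S$, $\alpha\in[0,1]$ and $\lambda\in[0,1]$. Since $S$ is a SEI set, the point $\alpha t+Et+\lambda\Psi(\alpha s+Es,\alpha t+Et)$ belongs to $S$, so $h$ is defined there. For each index $i\in I$, using that $h_i$ is SSEP w.r.t. $\Psi$ on $S$ and that $h_i(s)\le h(s)$, $h_i(t)\le h(t)$ by definition of the supremum, I get
\begin{eqnarray*}
h_i(\alpha t+Et+\lambda\Psi(\alpha s+Es,\alpha t+Et))&\leq& \lambda h_i(s)+(1-\lambda)h_i(t)\\
&\leq& \lambda h(s)+(1-\lambda)h(t).
\end{eqnarray*}
The right-hand side does not depend on $i$, hence it is an upper bound for the left-hand side over all $i\in I$; taking the supremum over $i$ yields
\begin{eqnarray*}
h(\alpha t+Et+\lambda\Psi(\alpha s+Es,\alpha t+Et))\leq \lambda h(s)+(1-\lambda)h(t),
\end{eqnarray*}
which is precisely the SSEP inequality for $h$ on $S$.

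The corresponding SQSEP statement (the parenthetical version that usually accompanies these theorems) would be handled identically, replacing $\lambda h_i(s)+(1-\lambda)h_i(t)$ by $\max\{h_i(s),h_i(t)\}\le\max\{h(s),h(t)\}$. There is no genuine obstacle here: the only point that requires a word of justification — and it is a routine one — is that the supremum can be pulled through the inequality on the left precisely because the bound on the right is independent of the index $i$; the boundedness hypothesis plays no role beyond keeping every quantity finite so that ``$\sup$'' makes sense in $R$.
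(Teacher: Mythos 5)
Your proof is correct, but it is not the route the paper takes for this particular statement. The paper proves this theorem via the epigraph characterization: it notes that each $epi(h_{i})$ is a strongly $G$-invex set (by Theorem \ref{20TT}), that $\bigcap_{i\in I}epi(h_{i})=epi(h)$ for $h=\sup_{i\in I}h_{i}$, that this intersection is again strongly $G$-invex, and then applies the converse direction of Theorem \ref{20TT} to conclude that $h$ is SSEP. You instead give the direct pointwise argument --- bound $h_{i}$ at the displaced point by $\lambda h_{i}(s)+(1-\lambda)h_{i}(t)\le\lambda h(s)+(1-\lambda)h(t)$ and pass to the supremum over $i$, the right-hand side being independent of $i$ --- which is essentially the content and proof of Theorem \ref{3a}; indeed your observation that the present theorem is just Theorem \ref{3a} with the hypothesis ``bounded above'' supplying the existence of the supremum is accurate, and the statement is largely a restatement of that earlier result. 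Your approach is more elementary and arguably tighter: it avoids the implicit lemma, nowhere proved in the paper, that an arbitrary intersection of strongly $G$-invex sets is strongly $G$-invex, and your justification for pulling the supremum through the inequality is cleaner than the $\sup$-of-a-sum step in the paper's proof of Theorem \ref{3a}. What the paper's route buys is a demonstration of the epigraph machinery of Theorem \ref{20TT} in action; what yours buys is self-containment and economy. No gap either way.
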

          \begin{proof}
          	 Recall that $h_{i},~i\in I$, are semi strongly $E$-preinvex on SEI set $S$. Then, the $epi(h_{i})$ of $h_{i}$,
          	 \begin{eqnarray*}epi(h_{i})=\{(s,\gamma):h_{i}(s)\leq\gamma\}
          	 	\end{eqnarray*} is SEI set. Then,
          	 \begin{eqnarray*}
          	 	\bigcap _{i\in I}epi(h_{i})&=&\{(s,\gamma):h_{i}(s)\leq\gamma\}\\
          	 	&=&\{(s,\gamma):h(s)\leq\gamma\}, 
          	 \end{eqnarray*}
          	  
          	  where $h(s)=\sup\limits_{i\in I}h_{i}(s)$, is strongly $G$-invex. Hence, by using Theorem \ref{20TT}, $h$ is semi strongly $E$-preinvex on $S$.  
          \end{proof}
         \begin{theorem}\label{13T} 
         	If the functions $h_{i}:R^{n}\rightarrow R,~ 1\leq i\leq k$, are SSEP (SQSEP) w.r.t. $\Psi$  on $R^{n}$. Then, the set $S=\{s\in R^{n}:h_{i}(s)\leq0,~1\leq i\leq k\}$ is SEI. 
         \end{theorem}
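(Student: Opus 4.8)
The plan is to verify the defining membership condition for a strongly $E$-invex set directly from the functional inequalities satisfied by the $h_i$. First I would note that $R^n$ is itself a SEI set with respect to any $\Psi$: indeed, for $s,t\in R^n$, $\alpha,\lambda\in[0,1]$ we have $Et\in R^n$ (since $E\colon R^n\to R^n$) and $\Psi(\alpha s+Es,\alpha t+Et)\in R^n$ (since $\Psi\colon R^n\times R^n\to R^n$), so $\alpha t+Et+\lambda\Psi(\alpha s+Es,\alpha t+Et)\in R^n$. Hence the SSEP (respectively SQSEP) inequalities for each $h_i$ are available at all $s,t\in R^n$. If $S$ is empty the assertion is vacuous, so one may assume $S\neq\emptyset$.

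Next, fix $s,t\in S$, $\alpha\in[0,1]$, $\lambda\in[0,1]$. By definition of $S$ this means $h_i(s)\leq 0$ and $h_i(t)\leq 0$ for every $i$ with $1\leq i\leq k$. I would then apply, for each such $i$, the SSEP inequality
\[
h_i\bigl(\alpha t+Et+\lambda\Psi(\alpha s+Es,\alpha t+Et)\bigr)\leq \lambda h_i(s)+(1-\lambda)h_i(t)\leq 0,
\]
the last step holding because $\lambda\geq 0$, $1-\lambda\geq 0$ and both $h_i(s),h_i(t)\leq 0$. In the SQSEP case the same conclusion follows from
\[
h_i\bigl(\alpha t+Et+\lambda\Psi(\alpha s+Es,\alpha t+Et)\bigr)\leq \max\{h_i(s),h_i(t)\}\leq 0.
\]

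Since this holds for every $i$ with $1\leq i\leq k$, the point $\alpha t+Et+\lambda\Psi(\alpha s+Es,\alpha t+Et)$ satisfies all the defining constraints of $S$, hence lies in $S$. As $s,t,\alpha,\lambda$ were arbitrary, $S$ is a strongly $E$-invex set with respect to $\Psi$. I do not anticipate a genuine obstacle here: the argument is essentially a one-line application of the definitions, and the only points worth recording carefully are that $R^n$ is itself SEI (so that the SSEP/SQSEP inequalities may be invoked) and that passing to the finite intersection is harmless because the bound $\leq 0$ is obtained uniformly in $i$.
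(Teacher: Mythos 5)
Your proof is correct and follows essentially the same route as the paper's: fix $s,t\in S$, apply the SSEP (resp.\ SQSEP) inequality to each $h_i$ to bound $h_i$ at the combined point by $0$, and conclude membership in $S$. Your added remarks that $R^n$ is itself SEI and the explicit treatment of the SQSEP case are sensible refinements the paper leaves implicit, but they do not change the argument.
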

         \begin{proof}
         		Recall that $h_{i}(s),~ 1\leq i\leq k$, are semi strongly $E$-preinvex. For any $s,t\in S\subseteq R^{n},\alpha\in[0,1]~\&~\lambda\in[0,1]$, we have
         	\begin{eqnarray*}
         		h_{i}(\alpha t+Et+\lambda\Psi(\alpha s+Es,\alpha t+Et))&\leq& \lambda h_{i}(s) +(1-\lambda)(h_{i}(t)\\
         		&\leq& 0.
         	\end{eqnarray*}
         Hence,	$\alpha t+Et+\lambda\Psi(\alpha s+Es,\alpha t+Et)\in S.$ Thus, the set $S$ is SEI.
         \end{proof}
         \begin{theorem} \label{14T}
         	  If the functions $h_{i}:R^{n}\rightarrow R, 1\leq i\leq k$ are SSEP (SQSEP) w.r.t. $\Psi$ on $R^{n}$. Then, $S=\bigcap\limits_{i}^{k}\{s\in R^{n}:h_{i}(s)\leq0,~1\leq i\leq k\}$ is SEI.
         \end{theorem}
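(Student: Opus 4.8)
The plan is to recognize that the set $S$ in this statement is, written out, identical to the set appearing in Theorem \ref{13T}: since the constraint ``$h_{i}(s)\leq 0$ for all $1\leq i\leq k$'' is already a simultaneous requirement, we have $\bigcap\limits_{i}^{k}\{s\in R^{n}:h_{i}(s)\leq 0,~1\leq i\leq k\}=\{s\in R^{n}:h_{i}(s)\leq 0,~1\leq i\leq k\}$. Hence the conclusion follows at once from Theorem \ref{13T}, and the proof is essentially one line.

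Should a self-contained argument be preferred, I would run the short direct verification instead. First I fix $s,t\in S$, $\alpha\in[0,1]$ and $\lambda\in[0,1]$, so that $h_{i}(s)\leq 0$ and $h_{i}(t)\leq 0$ for every $1\leq i\leq k$. Applying the SSEP inequality to each $h_{i}$ gives
\begin{eqnarray*}
h_{i}(\alpha t+Et+\lambda\Psi(\alpha s+Es,\alpha t+Et))\leq \lambda h_{i}(s)+(1-\lambda)h_{i}(t)\leq 0,
\end{eqnarray*}
for each $i$, so the point $\alpha t+Et+\lambda\Psi(\alpha s+Es,\alpha t+Et)$ satisfies all $k$ defining inequalities and therefore lies in $S$; this is exactly the membership condition defining $S$ as a SEI set. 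In the SQSEP case the bound $\lambda h_{i}(s)+(1-\lambda)h_{i}(t)$ is replaced by $\max\{h_{i}(s),h_{i}(t)\}\leq 0$ and the same conclusion is reached.

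I do not expect any genuine obstacle, since the claim merely restates Theorem \ref{13T}; the only choice is presentational. One could alternatively observe that each single level set $\{s\in R^{n}:h_{i}(s)\leq 0\}$ is SEI by the lower-level-set characterization proved earlier in this section, and then intersect. However, the excerpt does not record that a finite intersection of SEI sets is again SEI, so I would avoid that route and instead either quote Theorem \ref{13T} or give the two-line direct verification above.
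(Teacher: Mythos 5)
Your proposal is correct and takes essentially the same route as the paper: the paper's own proof also reduces the claim to Theorem~\ref{13T} (it declares each $S_{i}$ SEI and then cites Theorem~\ref{13T} for the intersection, which, as you observe, is literally the set treated in that theorem because the constraint is already simultaneous). Your fallback direct verification and your decision to avoid an unproved ``finite intersection of SEI sets is SEI'' lemma are both sound; if anything your phrasing is tighter than the paper's.
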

         \begin{proof}
         	The sets $S_{i}=\{u\in R^{n}:h_{i}(s)\leq 0,~1\leq i\leq k\},~1\leq i\leq k$ are SEI.  Hence, by Theorem \ref{13T}, the intersection~ $S=\bigcap\limits_{i}^{k}S_{i}$~~~ of $S_{i}$ is also SEI set.  
         	\end{proof}

         For the differentiable functions, semi strongly $E$-invexity can be defined as follows:  
         \begin{definition}\label{10D}
         	Let $ S\subseteq R^{n}$ be a SEI set and $h:S\rightarrow R$ be differentiable function on $S$. A function  $h$ is said to be semi strongly $E$-invex (SSEI) w.r.t. $\Psi$ on $S$, if $\forall~s,t\in  S$ and $ \alpha\in [0,1] $, we have
         	\begin{eqnarray*}\nabla h(Et) \Psi (\alpha s+ Es,\alpha t+Et)^{T}\leq  h(s)-h(t).
         		\end{eqnarray*}
         	 If $\alpha=0$, then it reduces to semi $E$-invex function defined by Jaiswal \cite{Jaiswal}.
         \end{definition}
     \begin{example}
     	Consider the map $E:R^{2}\rightarrow R^{2}$ is defined as $E(s,t)=(0,t)$, and the map $\Psi:R^{2}\times R^{2}\rightarrow R^{2}$ is defined as $\Psi((s_{1},t_{1}),(s_{2},t_{2}))=(s_{1}-s_{2},t_{1}-t_{2})$. Then, the set $S=\{(u,v)\in R^{2}:s,t\leq0\}$ is SEI w.r.t. $\Psi$. Let the function $h:S\rightarrow R$ be defined by $h(s,t)=s^{3}+t^{3}$. Then, $h$ is SSEI w.r.t. $\Psi$ on $S$. 
     	For every $s,t\in S,~ s_{1}\leq s_{2},~ t_{1}\leq t_{2}$ and $\alpha\in[0,1]$, we have
     	\begin{eqnarray*}\nabla h(Et)\Psi(\alpha s+Es,\alpha t+Et)^{T}\leq h(s)-h(t).
     		\end{eqnarray*}
     \end{example}
 
 \begin{remark}\label{1r}
 	  By the Theorem \ref{1T}, a differentiable SEI function is SEP function if $\Psi$ satisfies Condition A and by the Theorem  \ref{6TT}, every SEP $h$ is SSEP w.r.t. $\Psi$ on $S$ if and only if $ h(Et)\leq h(t)$,~$\forall~t\in S$. Hence, all the results for SEP functions are true for SSEP function, when $h(Et)\leq h(t)$.
 \end{remark}
    
    The relationship between SSEP  and  SPSEP is discussed in this theorem.
     \begin{theorem} 
     	Let $S\subseteq  R^{n}$ be an open SEI set  w.r.t. $\Psi$ and $E:S\rightarrow S$ be an onto map. If the function $h: S\rightarrow R$ is differentiable SEI w.r.t. $\Psi$ on $S$, $\Psi$ satisfies $Condition~A$ and $h(Et)\leq h(t)$,~$\forall~t\in S$. Then, $h$ is SPSEP  w.r.t. $\Psi$ on $S$.
     \end{theorem}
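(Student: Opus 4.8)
The plan is to obtain this statement by chaining together three results already proved in the paper, so the argument is modular. First I would invoke Theorem~\ref{1T}: the present hypotheses say exactly that $S$ is an open SEI set w.r.t.\ $\Psi$, that $E\colon S\to S$ is onto, that $h$ is differentiable and SEI w.r.t.\ $\Psi$ on $S$, and that $\Psi$ satisfies Condition~A. These are precisely the hypotheses of Theorem~\ref{1T}, so it follows that $h$ is SEP w.r.t.\ $\Psi$ on $S$.

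Next I would apply Theorem~\ref{6TT} to the function $h$, which we have just shown to be SEP w.r.t.\ $\Psi$ on the SEI set $S$. The remaining standing hypothesis $h(Et)\le h(t)$ for all $t\in S$ is exactly the extra condition required by the ``if'' direction of that characterization, so we conclude that $h$ is SSEP on $S$. Finally I would invoke Theorem~\ref{8T}: an SSEP function on an SEI set is automatically semi pseudo strongly $E$-preinvex, with the strictly positive function taken as $b(s,t)=h(t)-h(s)$ on the region where $h(s)<h(t)$. This gives that $h$ is SPSEP w.r.t.\ $\Psi$ on $S$, which is the claim.

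Since each step is a direct citation, I do not anticipate a real obstacle; the only point to check is that the hypotheses of the present theorem are strong enough to feed Theorem~\ref{1T} (openness of $S$ and surjectivity of $E$ are used there) and that $h(Et)\le h(t)$ is the precise condition consumed by Theorem~\ref{6TT}. If one preferred a self-contained proof, one could instead unwind the definitions: start from the SEP inequality $h(\alpha t+Et+\lambda\Psi(\alpha s+Es,\alpha t+Et))\le \lambda h(Es)+(1-\lambda)h(Et)$ supplied by Theorem~\ref{1T}, use $h(Es)\le h(s)$ and $h(Et)\le h(t)$ to bound the right-hand side by $\lambda h(s)+(1-\lambda)h(t)$, and then, assuming $h(s)<h(t)$, run the same chain of estimates as in the proof of Theorem~\ref{8T} to arrive at $h(t)+\lambda(\lambda-1)b(s,t)$ with $b(s,t)=h(t)-h(s)>0$. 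I would present the short modular version as the main proof.
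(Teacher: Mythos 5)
Your proposal is correct and follows exactly the paper's own route: the paper proves this theorem by citing Remark~\ref{1r} (which is precisely the chain Theorem~\ref{1T} followed by Theorem~\ref{6TT}) together with Theorem~\ref{8T}. Your version simply spells out the chain that the paper compresses into the remark, so there is nothing to add.
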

     \begin{proof}
     	By Remark \ref{1r} and Theorem \ref{8T}, the proof is obvious.
     \end{proof}
         
         On the SEI set, we define the semi quasi strongly $E$-invexity and the semi pseudo strongly $E$-invexity  as follows:
          \begin{definition}   A differentiable function $h:S\rightarrow R$ is said to be semi quasi strongly $E$-invex (SQSEI) w.r.t. $\Psi$ on SEI set $S$, if $\forall~s,t\in S$ and $\alpha\in[0,1]$, we have 
          	\begin{eqnarray*}h(s)\leq h(t) \implies\nabla h(Et)\Psi(\alpha s+Es,\alpha t+Et)^{T}\leq0.
          		\end{eqnarray*}
          	
          	\noindent
          	If $\alpha=0$, then it reduces to semi quasi $E$-invex function defined by Jaiswal \cite{Jaiswal}.
          \end{definition}
          \begin{example}
         Consider $E\colon R^{2}\rightarrow R^{2}$ is defined by $E(s,t)=(0,t)$, and $\Psi\colon R^{2}\times R^{2}\rightarrow R^{2}$ is defined by $\Psi((s_{1},t_{1}),(s_{2},t_{2}))=(s_{1}-s_{2},t_{1}-t_{2})$.	Then, set $S=\{(s,t)\in R^{2}:s,t\leq0\}\subseteq R^{2}$ is SEI w.r.t. $\Psi$. Let the function $h:S\rightarrow R$ be  defined by $h(s,t)=s^{3}+t^{3}$. Then, $h$ is SQSEI w.r.t. $\Psi$ on $S$.  For every $x=(s_{1},t_{1}),y=(s_{2},t_{2})\in S,~s_{1}\leq s_{2},~t_{1}\leq t_{2}$ and $\alpha\in[0,1],$ we have
         	\begin{eqnarray*}h(x)\leq h(y) \implies\nabla h(Ey)\Psi(\alpha x+Ex,\alpha y+Ey)^{T}
         		\end{eqnarray*}
         	\begin{eqnarray*}
         		&=&3t_{2}^{2}(\alpha+1)(t_{1}-t_{2})\\
         		&\leq &0. 
         	\end{eqnarray*}
         	 
         	But $h$ is not SSEI w.r.t. $\Psi$ on $S$. Particularly, at the points $s_{1}=-\dfrac{1}{2},s_{2}=-\dfrac{1}{4},t_{1}=-\dfrac{1}{3},t_{2}=-\dfrac{1}{9}$ and $\alpha=0$,
         	\begin{eqnarray*}
         	0&\leq& h(s)-h(t)-\nabla h(Et)\Psi(\alpha s+Es,\alpha t+Et)^{T}
         	\\&=&s_{1}^{3}-s_{2}^{3}+t_{1}^{3}+2t_{2}^{3}-3t_{2}^{2}t_{1},\end{eqnarray*}
         	which implies  
         	\begin{eqnarray*}\nabla h(Et)\Psi(\alpha s+Es,\alpha t+Et)^{T}\nleq h(s)-h(t).
         		\end{eqnarray*}
         	 \end{example}
          
          \begin{theorem}
            Let $h:S\rightarrow R$ be a differentiable SSEI on SEI set $S$ and $h(s)\leq h(t)$ for every $s,t\in S$. Then, $h$ is SQSEI on $S$.
          \end{theorem}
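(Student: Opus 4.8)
The plan is to verify the defining inequality of semi quasi strongly $E$-invexity directly from that of semi strongly $E$-invexity, exactly in the spirit of Theorem \ref{6T}. First I would fix $s,t\in S$ and $\alpha\in[0,1]$ and suppose the premise $h(s)\le h(t)$ appearing in the definition of SQSEI; note that under the standing assumption $h(s)\le h(t)$ for every $s,t\in S$ this premise is automatically fulfilled, so in effect there is nothing to restrict.

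Next I would invoke differentiable semi strong $E$-invexity of $h$ (Definition \ref{10D}), which yields
\begin{eqnarray*}
\nabla h(Et)\,\Psi(\alpha s+Es,\alpha t+Et)^{T}\le h(s)-h(t).
\end{eqnarray*}
Since $h(s)\le h(t)$, the right-hand side is nonpositive, hence
\begin{eqnarray*}
\nabla h(Et)\,\Psi(\alpha s+Es,\alpha t+Et)^{T}\le 0,
\end{eqnarray*}
which is precisely the inequality required in the definition of SQSEI. As $s,t$ and $\alpha$ were arbitrary, $h$ is SQSEI on $S$.

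I do not expect any real obstacle: the argument is a one-line implication, and the only point to watch is that the quantifiers in Definition \ref{10D} (all $s,t\in S$, all $\alpha\in[0,1]$) match those in the SQSEI definition (the same quantifiers, now under the extra premise $h(s)\le h(t)$). In fact the computation shows that the blanket hypothesis $h(s)\le h(t)$ for all $s,t$ is more than is needed; what is actually used is only that whenever the SQSEI premise $h(s)\le h(t)$ holds, the SSEI bound forces the gradient term to be $\le 0$. As in Theorem \ref{6T}, one may simply remark that the conclusion is immediate from the definitions.
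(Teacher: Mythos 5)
Your argument is correct and is exactly the one the paper intends: the paper's proof simply states that the claim is obvious from Definition \ref{10D}, and your one-line deduction ($\nabla h(Et)\,\Psi(\alpha s+Es,\alpha t+Et)^{T}\le h(s)-h(t)\le 0$ whenever $h(s)\le h(t)$) is the spelled-out version of that. Your side remark that only the pointwise premise $h(s)\le h(t)$ is needed, rather than the blanket hypothesis, is also accurate.
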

          \begin{proof}
          	By the Definition \ref{10D}, the proof is obvious.
          \end{proof}

         	  In the following theorem, we explain a relationship between SQSEP and SSEI.
         	 \begin{theorem}
         	 	Let $S\subseteq R^{n}$ be SEI set. Let $h:S\rightarrow R$ be a differentiable SEI function on $S$, satisfies the $Condition~A$ and  $h(Et)\leq h(t)$ for every $t\in S$. Then, $h$ is SQSEP on $S$. 
         	 \end{theorem}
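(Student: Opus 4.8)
The plan is to obtain the conclusion by chaining together three facts already available in the paper, the last step being a one-line elementary estimate. First I would invoke Theorem \ref{1T}: since $S$ is an open SEI set, $E:S\to S$ is onto, $h$ is a differentiable SEI function w.r.t. $\Psi$ on $S$, and $\Psi$ satisfies Condition~A, Theorem \ref{1T} yields that $h$ is SEP w.r.t. $\Psi$ on $S$. Next, because $h(Et)\le h(t)$ for every $t\in S$, the characterization of SSEP among SEP functions (Theorem \ref{6TT}) upgrades this to the statement that $h$ is SSEP w.r.t. $\Psi$ on $S$.

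It then remains to pass from SSEP to SQSEP, which is immediate. For any $s,t\in S$, $\alpha\in[0,1]$ and $\lambda\in[0,1]$, the SSEP inequality gives $h(\alpha t+Et+\lambda\Psi(\alpha s+Es,\alpha t+Et))\le \lambda h(s)+(1-\lambda)h(t)$, and since $\lambda h(s)+(1-\lambda)h(t)\le \lambda\max\{h(s),h(t)\}+(1-\lambda)\max\{h(s),h(t)\}=\max\{h(s),h(t)\}$, the SQSEP inequality $h(\alpha t+Et+\lambda\Psi(\alpha s+Es,\alpha t+Et))\le\max\{h(s),h(t)\}$ holds for all such $s,t,\alpha,\lambda$. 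Alternatively, one could route through the quasi notion: a SEP function is trivially QSEP (replace the convex combination by the maximum), after which the earlier theorem asserting that QSEP together with $h(Et)\le h(t)$ implies SQSEP finishes the argument.

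There is essentially no hard step here; the proof is bookkeeping about which inequality implies which. The two points to be careful about are: (i) making sure the hypotheses needed for Theorem \ref{1T} --- openness of $S$ and surjectivity of $E:S\to S$, together with the full Condition~A (both $\mathbf{A_1}$ and $\mathbf{A_2}$) --- are genuinely in force, since they are what licenses the first implication; and (ii) not appealing to Theorem \ref{6T} for the last step, because that theorem carries the much stronger, essentially degenerate hypothesis $h(s)\le h(t)$ for all $s,t\in S$, whereas the passage SSEP $\Rightarrow$ SQSEP needs nothing beyond the fact that a convex combination of two real numbers never exceeds their maximum.
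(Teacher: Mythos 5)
Your proposal is correct, and its first two steps coincide exactly with the paper's own route: the paper's proof reads ``By Remark \ref{1r} and Theorem \ref{6T} the proof is obvious,'' and Remark \ref{1r} is precisely the chain Theorem \ref{1T} (differentiable SEI $+$ Condition~A $\Rightarrow$ SEP) followed by Theorem \ref{6TT} (SEP $+$ $h(Et)\le h(t)$ $\Rightarrow$ SSEP). Where you genuinely diverge is the final step. The paper closes by citing Theorem \ref{6T}, but, as you observe, that theorem carries the additional hypothesis $h(s)\le h(t)$ for all $s,t\in S$, which is not among the assumptions of the present statement (and is essentially degenerate). Your replacement --- the one-line estimate $\lambda h(s)+(1-\lambda)h(t)\le\max\{h(s),h(t)\}$, valid with no extra hypotheses --- is what the argument actually needs, so your version repairs a gap in the paper's own justification rather than merely restating it. Your caveat (i) is also well taken and applies equally to the paper: the theorem as stated does not assume $S$ open or $E:S\to S$ onto, yet both are hypotheses of Theorem \ref{1T}, so strictly speaking they must be added for either proof to go through.
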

         	  \begin{proof}
         	  	 By Remark \ref{1r} and Theorem \ref{6T} the proof is obvious.
         	  \end{proof}
      \begin{definition}   
      	A differentiable function $h:S\rightarrow R$ is said to be semi pseudo strongly $E$-invex (SPSEI) w.r.t. $\Psi$ on SEI set $S$, if  
           	\begin{eqnarray*}\nabla h(Et)\Psi(\alpha s+Es,\alpha t+Et)^{T}\geq0\implies h(s)\geq h(t),
           	\end{eqnarray*}
           	$\forall~s,t \in S$, $\alpha \in[0,1],~\&~\lambda \in[0,1]$.\\
           	 
           \noindent
           	 If $\alpha=0$, then it reduces to semi pseudo $E$-invex function which was defined by Jabarootian \cite{jabar}.
           \end{definition}
         \begin{example}
         		Consider $E\colon R^{2}\rightarrow R^{2}$ is defined by $E(s,t)=(0,t)$, and $\Psi\colon R^{2}\times R^{2}\rightarrow R^{2}$ is defined by $\Psi((s_{1},t_{1}),(s_{2},t_{2}))=(s_{1}-s_{2},t_{1}-t_{2})$. Then, set $S=\{(s,t)\in R^{2}:s,t\geq0\} \subseteq R^{2}$ is SEI w.r.t. $\Psi$. Let the function $h:S\rightarrow R$ be defined by $h(s,t)=-s^{2}-t^{2}$. Then, $h$ is SPSEI w.r.t. $\Psi$ on $S$.

          	 \noindent
          	But $h$ is not SSEI w.r.t. $\Psi$ on $S$. Particularly, at the points $x=(s_{1},t_{1}),y=(s_{2},t_{2})$, $s_{1}=s_{2}$ and $\alpha=0$,
          	\begin{eqnarray*}
          	\hspace{3.1cm}0&\leq & h(x)-h(y)-\nabla h(Ey)\Psi(\alpha x+Ex,\alpha y+Ey)^{T}\\
          	&=&-(t_{2}-t_{1})^{2}\\ &\leq &0.	 
          	\end{eqnarray*}
           
          	$\implies \nabla h(Et)\Psi(\alpha s+Es,\alpha t+Et)^{T}\nleq h(s)-h(t).$
         	\end{example}
         \begin{theorem}
     	Let $S\subseteq R^{n}$ be SEI set and the function $h:S\rightarrow R$ be SSEI on $S$. If 
     	\begin{eqnarray*}\nabla h(Et)\Psi(\alpha s+Es,\alpha t+Et)^{T}\geq0, ~~\forall ~~s,t\in S,~\alpha\in[0,1].
     	\end{eqnarray*}
     	Then, $h$ is SPSEI on $S$.
        \end{theorem}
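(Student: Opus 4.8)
The plan is to obtain the required implication simply by chaining the defining inequality of semi strongly $E$-invexity with the standing hypothesis of the theorem. First I would recall that, by Definition \ref{10D}, since $h$ is SSEI w.r.t. $\Psi$ on $S$, for every $s,t\in S$ and $\alpha\in[0,1]$ one has
\begin{eqnarray*}\nabla h(Et)\Psi(\alpha s+Es,\alpha t+Et)^{T}\leq h(s)-h(t).
\end{eqnarray*}

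To check that $h$ is SPSEI, I would fix arbitrary $s,t\in S$ and $\alpha\in[0,1]$ and assume, as required by the definition of SPSEI, that $\nabla h(Et)\Psi(\alpha s+Es,\alpha t+Et)^{T}\geq0$ (which is in any case guaranteed by the hypothesis of the theorem). Combining this with the SSEI inequality from the previous step yields
\begin{eqnarray*}h(s)-h(t)\geq\nabla h(Et)\Psi(\alpha s+Es,\alpha t+Et)^{T}\geq0,
\end{eqnarray*}
so $h(s)\geq h(t)$. Since $s,t\in S$ and $\alpha\in[0,1]$ were arbitrary, this is precisely the condition for $h$ to be semi pseudo strongly $E$-invex w.r.t. $\Psi$ on $S$.

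There is no genuine obstacle in this argument: it is a single use of transitivity of inequalities, and the only point that needs care is matching the universal quantifiers appearing in the definition of SPSEI, which is automatic because the SSEI inequality is valid for all admissible triples $(s,t,\alpha)$. One may further observe that, jointly with the hypothesis, the SSEI inequality forces $h(s)\geq h(t)$ for every pair $s,t\in S$, hence $h$ is constant on $S$; this is consistent with the statement and merely reflects the strength of the assumed gradient condition.
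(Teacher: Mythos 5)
Your argument is correct and is precisely the chain of inequalities the paper leaves implicit when it declares the proof ``obvious'' from the definition of SSEI: the inequality $\nabla h(Et)\Psi(\alpha s+Es,\alpha t+Et)^{T}\leq h(s)-h(t)$ together with the assumed nonnegativity of the left-hand side immediately yields $h(s)\geq h(t)$, which is the SPSEI implication. Your closing observation is also accurate and worth keeping in mind: the blanket gradient hypothesis forces $h(s)\geq h(t)$ for every pair (hence $h$ constant on $S$), so the hypothesis is actually stronger than needed --- SSEI alone already implies SPSEI, since the implication only needs to be verified when its antecedent holds.
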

       \begin{proof}
     	By the Definition \ref{10D}, the proof is obvious.
     \end{proof}
         
        \section{\bf{Non-linear programming problem}}\label{4S} 
        In this section, we consider the non-linear programming problem for SSEP (SQSEP) functions is formulated as follows:
        \begin{equation} \label{4.1} 
       \hspace*{1cm} (P)~~~ \left\{\begin{array}{ll}
        Min~h(s)\\
       
        h_{j}(s)\leq 0  , ~1\leq j\leq k,\\
       
        s\in R^{n},\\
        
        \end{array}\right.
        \end{equation}
        where $E:R^{n}\rightarrow R^{n}$ is a map and $h:R^{n}\rightarrow R,~h_{j}:R^{n}\rightarrow R,~1\leq j\leq m$, are SSEP (SQSEP) functions on $R^{n}$. Let $X$ be the set of feasible solutions as given below:
        \begin{equation} \label{4.2}
        X=\{s\in R^{n}:~h_{j}(s)\leq 0,~~1\leq j\leq k\}.
        \end{equation}
       
        \begin{lemma}\label{1L}  
           	The set $X$ of feasible solutions of (\ref{4.1}) is a SEI.
           \end{lemma}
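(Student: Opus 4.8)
The plan is to recognize that Lemma \ref{1L} is nothing more than Theorem \ref{13T} specialized to the constraint functions $h_j$, with the ambient strongly $E$-invex set taken to be $R^n$ itself. Indeed $R^n$ is trivially a SEI set with respect to any $\Psi$ and any $E\colon R^n\to R^n$, since $\alpha t+Et+\lambda\Psi(\alpha s+Es,\alpha t+Et)$ automatically lies in $R^n$. Thus the quickest route is simply to invoke Theorem \ref{13T} applied to $h_1,\dots,h_k$, which yields that $X=\{s\in R^n: h_j(s)\le 0,\ 1\le j\le k\}$ is SEI.

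If a self-contained argument is preferred, I would argue directly from the definition of a SEI set. Fix $s,t\in X$, $\alpha\in[0,1]$, $\lambda\in[0,1]$, and put $w=\alpha t+Et+\lambda\Psi(\alpha s+Es,\alpha t+Et)$. Since $X\subseteq R^n$ and $R^n$ is SEI, we have $w\in R^n$, so it only remains to verify the feasibility constraints $h_j(w)\le 0$ for every $j$. Using that each $h_j$ is SSEP w.r.t. $\Psi$, we obtain $h_j(w)\le \lambda h_j(s)+(1-\lambda)h_j(t)$; because $s,t\in X$ gives $h_j(s)\le 0$ and $h_j(t)\le 0$, and $\lambda\ge 0$, $1-\lambda\ge 0$, the right-hand side is $\le 0$. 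Hence $w\in X$. In the SQSEP case the identical conclusion follows from $h_j(w)\le\max\{h_j(s),h_j(t)\}\le 0$.

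There is no real obstacle in this lemma — it is a direct corollary of Theorem \ref{13T}. The only point worth stating explicitly is that $R^n$ plays the role of the underlying SEI set, so that the membership $w\in R^n$ comes for free and the only thing genuinely requiring the (SSEP or SQSEP) hypothesis is the sign estimate on $h_j(w)$.
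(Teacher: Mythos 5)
Your proposal is correct and matches the paper exactly: the paper's proof of Lemma \ref{1L} is a one-line appeal to Theorem \ref{13T}, which is precisely your first paragraph. Your self-contained second paragraph merely unfolds the proof of Theorem \ref{13T} itself, so nothing genuinely different is happening.
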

           \begin{proof}
             The proof is obvious based on Theorem \ref{13T}.
           	  \end{proof}
       \begin{theorem}\label{36}
       	Let $X\subseteq R^{n}$ be a SEI set and using this inequality $h(\alpha t+Et)\leq h(t),~\forall~ t\in X,\alpha\in[0,1]$. If $t^{*}\in X$ is a solution of the following problem:
       	\begin{eqnarray*}(P_{\alpha})~Min ~h(\alpha t+Et),
       		\end{eqnarray*} 
       	subject to $t\in X$. Then, $\alpha t^{*}+Et^{*}$ represents a solution of (\ref{4.1}).
       	\end{theorem}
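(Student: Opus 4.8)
The plan is to verify two things: that the transported point $\alpha t^{*}+Et^{*}$ is feasible for $(P)$ (i.e. lies in $X$), and that its objective value $h(\alpha t^{*}+Et^{*})$ is no larger than $h(s)$ for every feasible $s$. Together these say exactly that $\alpha t^{*}+Et^{*}$ solves (\ref{4.1}).

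For feasibility, I would use that $X$ is a SEI set. Writing the defining relation of a SEI set with the choices $s=t=t^{*}$, $\lambda=0$, and the given $\alpha$, the point $\alpha t^{*}+Et^{*}+0\cdot\Psi(\alpha t^{*}+Et^{*},\alpha t^{*}+Et^{*})=\alpha t^{*}+Et^{*}$ belongs to $X$; equivalently, since each constraint function $h_{j}$ is SSEP (SQSEP), Theorem \ref{1a} gives $h_{j}(\alpha t^{*}+Et^{*})\le h_{j}(t^{*})\le 0$ for $1\le j\le k$. Either route shows that $\alpha t^{*}+Et^{*}$ is a feasible solution of (\ref{4.1}).

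For optimality, let $s\in X$ be arbitrary. Since $t^{*}$ solves $(P_{\alpha})$ and $s$ is a competitor in that problem, $h(\alpha t^{*}+Et^{*})\le h(\alpha s+Es)$. Applying the hypothesised inequality $h(\alpha t+Et)\le h(t)$ at $t=s$ gives $h(\alpha s+Es)\le h(s)$. Chaining the two, $h(\alpha t^{*}+Et^{*})\le h(s)$ for every $s\in X$. Combined with the feasibility already established, this shows $\alpha t^{*}+Et^{*}$ minimises $h$ over the feasible set $X$, hence it is a solution of (\ref{4.1}).

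No step is genuinely hard; the only points that need care are keeping $(P_{\alpha})$ (minimisation of $t\mapsto h(\alpha t+Et)$ over $X$) distinct from $(P)$ (minimisation of $h$ over $X$), and applying the inequality $h(\alpha t+Et)\le h(t)$ at the competitor point $s$ rather than at $t^{*}$. The SEI structure of $X$ (or, equivalently, Theorem \ref{1a} applied to the constraint functions) is used only to guarantee that the transported point $\alpha t^{*}+Et^{*}$ remains feasible.
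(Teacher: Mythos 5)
Your proof is correct, and it rests on the same key chain of inequalities as the paper's: $h(\alpha t^{*}+Et^{*})\leq h(\alpha s+Es)\leq h(s)$, using optimality of $t^{*}$ for $(P_{\alpha})$ followed by the hypothesis $h(\alpha t+Et)\leq h(t)$ at the competitor point. The difference is one of packaging: the paper runs the argument by contradiction, supposing some $t\in X$ with $h(t)\leq h(\alpha t^{*}+Et^{*})$ and deducing $h(\alpha t+Et)\leq h(\alpha t^{*}+Et^{*})$ --- which, as stated with non-strict inequalities, does not actually contradict the optimality of $t^{*}$; your direct formulation avoids that slip entirely. You also explicitly verify that $\alpha t^{*}+Et^{*}$ is feasible (via the SEI property of $X$ with $\lambda=0$, or equivalently via Theorem \ref{1a} applied to the constraints), a step the paper omits but which is genuinely needed for $\alpha t^{*}+Et^{*}$ to count as a solution of (\ref{4.1}). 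So your version is both a little more careful and a little more complete than the published one.
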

       \begin{proof}
       	On contrary suppose $\alpha t^{*}+Et^{*}$ is not a solution of (\ref{4.1}), then  $\exists~t\in X$ s.t.
       	 \begin{eqnarray*} 
       	 	h(t)\leq h(\alpha t^{*}+Et^{*}).
       	 	\end{eqnarray*}
       	Also, 
       	\begin{eqnarray*}
       		h(\alpha t+Et)\leq h(t)\leq h(\alpha t^{*}+Et^{*}),
       		\end{eqnarray*}
       	which contradicts the optimality of $t^{*}$ for the problem $(P_{\alpha})$.\\ Therefore, $\alpha t^{*}+Et^{*}$ is a solution of (\ref{4.1}).  
       \end{proof}
       \begin{theorem}
       	Let $X\subseteq R^{n}$ be a SEI set and $h:R^{n}\rightarrow R$ be a SSEP (SQSEP) w.r.t. $\Psi$ on $X$. If $t^{*}\in X$ is a solution of the following problem:\\
       	\begin{eqnarray*}
       		(P_{\alpha})~~Min ~h(\alpha t+Et),
       		\end{eqnarray*} subject to $t\in X$. Then, $\alpha t^{*}+Et^{*}$ is a solution of (\ref{4.1}).
       \end{theorem}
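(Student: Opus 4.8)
The plan is to obtain this statement as an immediate consequence of the previously proved Theorem~\ref{36}, by checking that its hypothesis is automatically satisfied here. Recall that Theorem~\ref{36} requires the inequality $h(\alpha t+Et)\leq h(t)$ to hold for all $t\in X$ and $\alpha\in[0,1]$, together with $X$ being SEI, in order to conclude that a solution $t^{*}$ of $(P_{\alpha})$ produces a solution $\alpha t^{*}+Et^{*}$ of (\ref{4.1}). So the first step is to verify precisely that inequality in the present setting.

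To that end, I would invoke Theorem~\ref{1a} with $S=X$: since $X$ is assumed to be a SEI set and $h$ is SSEP (SQSEP) w.r.t. $\Psi$ on $X$, Theorem~\ref{1a} (whose statement covers both the SSEP and the SQSEP case) gives exactly $h(\alpha t+Et)\leq h(t)$ for every $t\in X$ and every $\alpha\in[0,1]$. With this in hand, all hypotheses of Theorem~\ref{36} are met — $X$ is SEI, the key inequality holds, and $t^{*}\in X$ solves $(P_{\alpha})$ — so applying Theorem~\ref{36} directly yields that $\alpha t^{*}+Et^{*}$ is a solution of (\ref{4.1}).

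Alternatively, and for self-containedness, one can reproduce the short contradiction argument of Theorem~\ref{36}: if $\alpha t^{*}+Et^{*}$ were not optimal for (\ref{4.1}), there would exist $t\in X$ with $h(t)\leq h(\alpha t^{*}+Et^{*})$; combining this with $h(\alpha t+Et)\leq h(t)$ from Theorem~\ref{1a} gives $h(\alpha t+Et)\leq h(\alpha t^{*}+Et^{*})$, contradicting the optimality of $t^{*}$ for $(P_{\alpha})$. I do not anticipate any real obstacle: the content is entirely carried by Theorem~\ref{1a} and Theorem~\ref{36}, and the only point requiring a line of care is noting that Theorem~\ref{1a} legitimately applies with the feasible set $X$ in the role of the SEI set $S$, and that its parenthetical SQSEP clause handles the second case with no change in the argument.
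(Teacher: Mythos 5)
Your proposal is correct and follows exactly the paper's own argument: the paper's proof reads ``By the Theorem \ref{1a} and Theorem \ref{36}, the proof is obvious,'' and you have simply spelled out that Theorem \ref{1a} (applied with $S=X$) supplies the inequality $h(\alpha t+Et)\leq h(t)$ required by Theorem \ref{36}. No difference in approach; your version is just more explicit.
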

       \begin{proof}
       	By the Theorem \ref{1a} and Theorem \ref{36}, the proof is obvious.
       \end{proof}
   
   \noindent
    We discuss the uniqueness of global optimal solution from the feasible solution for (\ref{4.1}) as follows:
           \begin{theorem}
           	Let $X\subseteq R^{n}$ be a SEI set. If the function $h:R^{n}\rightarrow R$ is SSSEP w.r.t. $\Psi$ on $X$, then the global optimal solution of (\ref{4.1}) is unique.
           \end{theorem}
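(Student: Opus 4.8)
The plan is to argue by contradiction, using the strictness built into the definition of SSSEP together with Lemma~\ref{1L}, which guarantees that the feasible set $X$ is strongly $E$-invex. Suppose $s_{1},s_{2}\in X$ are two \emph{distinct} global optimal solutions of (\ref{4.1}), so that $h(s_{1})=h(s_{2})=\beta$, where $\beta=\min_{s\in X}h(s)$ is the optimal value.

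First I would dispose of the degenerate case hidden in the definition: the strict inequality for an SSSEP function is available only when $\alpha s_{1}+Es_{1}\neq\alpha s_{2}+Es_{2}$ and $\lambda\in(0,1)$. I claim that since $s_{1}\neq s_{2}$ there is at least one $\alpha\in[0,1]$ with $\alpha s_{1}+Es_{1}\neq\alpha s_{2}+Es_{2}$: if no such $\alpha$ existed, then $\alpha=0$ would force $Es_{1}=Es_{2}$ and $\alpha=1$ would force $s_{1}+Es_{1}=s_{2}+Es_{2}$, whence $s_{1}=s_{2}$, a contradiction. Fix such an $\alpha$ and fix $\lambda=\tfrac12\in(0,1)$ (any $\lambda\in(0,1)$ works equally well).

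Next, set $z=\alpha s_{2}+Es_{2}+\tfrac12\Psi(\alpha s_{1}+Es_{1},\alpha s_{2}+Es_{2})$. By Lemma~\ref{1L}, $X$ is a SEI set, so $z\in X$, i.e.\ $z$ is feasible for (\ref{4.1}). Applying the defining inequality of an SSSEP function to $s=s_{1}$, $t=s_{2}$ with this choice of $\alpha$ and $\lambda=\tfrac12$ gives
\begin{eqnarray*}
h(z)<\tfrac12\,h(s_{1})+\tfrac12\,h(s_{2})=\tfrac12\beta+\tfrac12\beta=\beta,
\end{eqnarray*}
which contradicts $\beta=\min_{s\in X}h(s)$. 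Hence two distinct optimal solutions cannot exist, and the global optimal solution of (\ref{4.1}) is unique.

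I expect the only real point requiring care to be the justification that a suitable $\alpha$ exists so that the strict inequality in the SSSEP definition is legitimately invoked (the argument above using $\alpha=0$ and $\alpha=1$); the remainder is a direct combination of Lemma~\ref{1L} with the definition of strict semi strong $E$-preinvexity, and involves no computation.
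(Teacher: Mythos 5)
Your proof is correct and follows essentially the same contradiction argument as the paper: assume two distinct global optima, use Lemma~\ref{1L} to place the combined point $\alpha t+Et+\lambda\Psi(\alpha s+Es,\alpha t+Et)$ back in the feasible set, and invoke the strict SSSEP inequality to produce a feasible point with value below the minimum. In fact, your verification that some $\alpha\in[0,1]$ satisfies $\alpha s_{1}+Es_{1}\neq\alpha s_{2}+Es_{2}$ (so that the strict inequality in the definition of SSSEP is legitimately applicable) is a point the paper's own proof silently skips, so your version is the more careful one.
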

           \begin{proof}
           	 Assume that $u,v\in X$ represents two distinct global optimal solutions for (\ref{4.1}). Then, it follows that $h(s)=h(t)=\min\limits_{u\in X}h(u)$.
           	 The set $X$ is SEI and $h:R^{n}\rightarrow R$ is SSSEP by using Lemma \ref{1L}. Therefore,
   
           	  \begin{eqnarray*}
           	  	h(\alpha t+Et+\lambda\Psi(\alpha s+Es,\alpha t+Et))<\lambda h(s)+(1-\lambda)h(t)=h(t),
           	  	\end{eqnarray*} which gives
           	  \begin{eqnarray*}h(\alpha t+Et+\lambda\Psi(\alpha s+Es,\alpha t+Et))< h(t),
           	  \end{eqnarray*}
           	   for any $\alpha\in[0,1],~\lambda\in(0,1)$,
           	 which is a contradiction. Hence, the global optimal solution (\ref{4.1}) is unique.
           \end{proof}
        
       In the following theorem, we discuss a condition for a set of optimal solutions to be SEI.
       \begin{theorem}\label{19}
       	  Let $h:R^{n}\rightarrow R$ be SSEP  function w.r.t. $\Psi$ on SEI set $X\subseteq R^{n}$ and suppose that  $\beta=\min\limits_{s\in X}h(s)$. Then, the set $X_{opt}=\{s\in X:h(s)=\beta\}$ of optimal solutions of (\ref{4.1}) is SEI set. If $h$ is SSSEP  w.r.t. $\Psi$, then the set $X_{opt}$ is singleton.
       \end{theorem}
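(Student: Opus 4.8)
The plan is to argue directly from the definitions, treating the optimal value $\beta$ as a level set the way Theorem \ref{13T} and Lemma \ref{1L} treat the level $0$. First I would fix arbitrary $s,t\in X_{opt}$, so that $h(s)=h(t)=\beta$, together with arbitrary $\alpha\in[0,1]$ and $\lambda\in[0,1]$, and set $z:=\alpha t+Et+\lambda\Psi(\alpha s+Es,\alpha t+Et)$. Since $X$ is a SEI set w.r.t. $\Psi$, we have $z\in X$, and hence $h(z)\geq\beta$ because $\beta=\min_{u\in X}h(u)$.

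Next I would apply the SSEP inequality for $h$ at the pair $s,t$ (both lie in $X$, where $h$ is SSEP):
\[
h(z)=h(\alpha t+Et+\lambda\Psi(\alpha s+Es,\alpha t+Et))\leq\lambda h(s)+(1-\lambda)h(t)=\lambda\beta+(1-\lambda)\beta=\beta .
\]
Combining this with $h(z)\geq\beta$ forces $h(z)=\beta$, i.e. $z\in X_{opt}$. As $\alpha$ and $\lambda$ were arbitrary, this is exactly the definition of $X_{opt}$ being a SEI set (and $X_{opt}\neq\emptyset$ since the minimum $\beta$ is attained).

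For the final assertion I would argue by contradiction: assume $h$ is SSSEP w.r.t. $\Psi$ and that $X_{opt}$ contains two distinct points $s\neq t$. The one point that needs care is that the strict SSEP inequality is only available when $\alpha s+Es\neq\alpha t+Et$ and $\lambda\in(0,1)$. But if $\alpha s+Es=\alpha t+Et$ held for every $\alpha\in[0,1]$, then evaluating at $\alpha=0$ gives $Es=Et$ and at $\alpha=1$ gives $s+Es=t+Et$, whence $s=t$ — a contradiction. So there exists $\alpha\in[0,1]$ with $\alpha s+Es\neq\alpha t+Et$; fixing such an $\alpha$ and any $\lambda\in(0,1)$, strict semi strong $E$-preinvexity gives
\[
h(\alpha t+Et+\lambda\Psi(\alpha s+Es,\alpha t+Et))<\lambda h(s)+(1-\lambda)h(t)=\beta ,
\]
while the point on the left belongs to $X$ (as $X$ is SEI), so its value is at least $\beta$ — a contradiction. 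Hence $X_{opt}$ is a singleton.

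The bulk of the argument is the two-sided squeeze $h(z)=\beta$, which is routine; the only delicate point is securing an admissible $\alpha$ for the strict case, which is handled by the $\alpha=0$ and $\alpha=1$ evaluations above.
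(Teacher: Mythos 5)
Your proof is correct and follows essentially the same route as the paper: the SEI part combines the SSEP upper bound $h(z)\leq\lambda\beta+(1-\lambda)\beta=\beta$ with the minimality lower bound $h(z)\geq\beta$ (the paper simply writes the equality outright), and the singleton part derives the same strict-inequality contradiction with $\beta=\min_{s\in X}h(s)$. Your extra step securing an $\alpha$ with $\alpha s+Es\neq\alpha t+Et$ via the $\alpha=0$ and $\alpha=1$ evaluations is a point of care the paper's proof omits, and it is a welcome tightening rather than a change of method.
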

       \begin{proof}
       	 For every $s,t\in X_{opt}\subseteq X,~\alpha\in[0,1]$ and $\lambda\in[0,1]$. Then $u,v\in X$ and by using Lemma \ref{1L}, we get
       	\begin{eqnarray*}
       		\alpha t+Et+\lambda\Psi(\alpha s+Es,\alpha t+Et)\in X.
       		\end{eqnarray*}
       	 Recall that $h$ is SSEP, then 
       	\begin{eqnarray*}h(\alpha t+Et+\lambda\Psi(\alpha s+Es,\alpha t+Et))=\lambda h(s)+(1-\lambda)h(t)= \beta.
       	\end{eqnarray*}
       	Therefore, 
       	\begin{eqnarray*}\alpha t+Et+\lambda\Psi(\alpha s+Es,\alpha t+Et)\in X_{opt}.
       		\end{eqnarray*}
       	Hence, the set $X_{opt}$ is SEI.\\
       	On the other hand, assume on contrary that $\forall~s,t\in X_{opt},~~s\neq t,~\alpha\in[0,1]$ and $\lambda\in(0,1)$. Then, 
       	\begin{eqnarray*}\alpha t+Et+\lambda\Psi(\alpha s+Es,\alpha t+Et)\in X.
       		\end{eqnarray*}
       	Again, recall that $h$ is SSSEP, we have 
       	\begin{eqnarray*}h(\alpha t+Et+\lambda\Psi(\alpha s+Es,\alpha t+Et))<\lambda h(s)+(1-\lambda)h(t)= \beta,
       		\end{eqnarray*}
       which is a contradiction to $\beta=\min\limits_{s\in X}h(s)$. Thus,  the result has been proved.
       \end{proof}
   Analogous result to Theorem \ref{19} is as follows:
      \begin{theorem} \label{20}
       	Let function $h:R^{n}\rightarrow R$ be SQSEP w.r.t. $\Psi$ on SEI set $X$. Let $\beta=\min\limits_{s\in X}h(s)$. The set $X_{opt}=\{s\in X:h(s)=\beta\}$ of optimal solutions of (\ref{4.1}) is SEI. If  $h$ is SSQSEP w.r.t. $\Psi$ on $X$, then the set $X_{opt}$ is a singleton.
       \end{theorem}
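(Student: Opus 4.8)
The plan is to mirror, line for line, the two-part proof of Theorem \ref{19}, replacing the affine lower bound $\lambda h(s)+(1-\lambda)h(t)$ by the $\max$-type bound coming from semi quasi strong $E$-preinvexity. So I would first establish that $X_{opt}$ is SEI, and then deduce the singleton statement from a strictness argument under the SSQSEP hypothesis.

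For the first part, fix $s,t\in X_{opt}\subseteq X$, $\alpha\in[0,1]$ and $\lambda\in[0,1]$, and write $z=\alpha t+Et+\lambda\Psi(\alpha s+Es,\alpha t+Et)$. By Lemma \ref{1L} the feasible set $X$ is SEI, so $z\in X$. Since $h$ is SQSEP on $X$ and $h(s)=h(t)=\beta$, we get $h(z)\leq\max\{h(s),h(t)\}=\beta$; since $z\in X$ and $\beta=\min_{s\in X}h(s)$, we also get $h(z)\geq\beta$. Hence $h(z)=\beta$, i.e. $z\in X_{opt}$, which is exactly the inclusion required for $X_{opt}$ to be SEI. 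This step is routine.

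For the singleton claim, assume $h$ is SSQSEP and argue by contradiction: suppose $s,t\in X_{opt}$ with $s\neq t$; pick $\alpha\in[0,1]$ and $\lambda\in(0,1)$ and again set $z=\alpha t+Et+\lambda\Psi(\alpha s+Es,\alpha t+Et)\in X$. The strict version of the SQSEP inequality should give $h(z)<\max\{h(s),h(t)\}=\beta$, contradicting $\beta=\min_{s\in X}h(s)$, so $X_{opt}$ cannot contain two distinct points.

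The step I expect to be the main obstacle is precisely this last one: the strictness clause in Definition \ref{6}, as literally stated, is activated under the hypothesis $h(s)\neq h(t)$, which fails for two optimal solutions since there $h(s)=h(t)=\beta$. To make the contradiction legitimate one must instead invoke strictness in the form used for the strictly semi quasi strongly $E$-convex function, namely that the inequality is strict whenever $\alpha s+Es\neq\alpha t+Et$ (and $\lambda\in(0,1)$), and then check that $s\neq t$ indeed forces $\alpha s+Es\neq\alpha t+Et$ for a suitable choice of $\alpha$ using the structure of $E$ and $\Psi$. I would state this reading of the SSQSEP hypothesis explicitly at the beginning of the second part so that the chain of inequalities above is valid; everything else is bookkeeping.
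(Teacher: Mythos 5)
The paper gives no proof of Theorem \ref{20}; it is stated only as the analogue of Theorem \ref{19}, and your argument is precisely that analogue, so you match the intended approach. Your first part is in fact slightly more careful than the paper's own proof of Theorem \ref{19}, which writes the chain as an equality where it really means the upper bound from preinvexity combined with the lower bound from minimality. The obstacle you flag in the singleton step is genuine: Definition \ref{6} activates strictness only when $h(s)\neq h(t)$, which never holds for two optimal solutions, so the singleton claim does not follow from the definitions as literally stated; moreover, even the alternative reading via $\alpha s+Es\neq \alpha t+Et$ is not forced by $s\neq t$ for arbitrary $E$ (take $E$ constant and $\alpha=0$). The paper's own proof of the strict case of Theorem \ref{19} silently makes the same leap, so your decision to state explicitly the form of the SSQSEP hypothesis you need is the right repair rather than a deviation from the paper.
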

        
       \begin{theorem}
       	If the functions $h:R^{n}\rightarrow R,~h_{j}:R^{n}\rightarrow R,~1\leq j\leq k,$ are SQSEP w.r.t. $\Psi$ on $R^{n}$, then the set of optimal solutions of (\ref{4.1}) is SEI.
       \end{theorem}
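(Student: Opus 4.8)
The plan is to derive this statement as an immediate corollary of two results already available in the excerpt: Lemma \ref{1L} (which makes the feasible region SEI) and Theorem \ref{20} (which handles the optimal set of a SQSEP minimization). First I would fix the notation: let $X=\{s\in R^{n}:h_{j}(s)\le 0,\ 1\le j\le k\}$ be the feasible set of (\ref{4.1}) as in (\ref{4.2}), set $\beta=\min_{s\in X}h(s)$, and let $X_{opt}=\{s\in X:h(s)=\beta\}$ be the set of optimal solutions of (\ref{4.1}).

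The argument proceeds in two short steps. Step one: since $h_{j}$, $1\le j\le k$, are SQSEP w.r.t. $\Psi$ on $R^{n}$, Theorem \ref{13T} (equivalently Lemma \ref{1L}) yields that $X$ is a SEI set. Step two: the hypothesis that $h$ is SQSEP on $R^{n}$ gives, in particular, the defining SQSEP inequality for every pair of points of $X\subseteq R^{n}$, so $h$ is SQSEP w.r.t. $\Psi$ on the SEI set $X$. Theorem \ref{20} then applies directly and gives that $X_{opt}$ is SEI, which is the claim.

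For the record, the mechanism behind Theorem \ref{20} in this case is the following: pick $s,t\in X_{opt}$, $\alpha\in[0,1]$, $\lambda\in[0,1]$; since $X$ is SEI we get $z:=\alpha t+Et+\lambda\Psi(\alpha s+Es,\alpha t+Et)\in X$; since $h$ is SQSEP, $h(z)\le\max\{h(s),h(t)\}=\beta$; and since $z\in X$ while $\beta=\min_{s\in X}h(s)$, also $h(z)\ge\beta$, whence $h(z)=\beta$ and $z\in X_{opt}$. There is essentially no obstacle here; the only point demanding a word of care is that the notion SQSEP is defined relative to a SEI set, so one must establish that $X$ is SEI (step one) before invoking the SQSEP property of $h$ on $X$, and, as is implicit throughout Section \ref{4S}, the statement presumes that $\beta=\min_{s\in X}h(s)$ is attained, i.e. $X_{opt}\neq\emptyset$.
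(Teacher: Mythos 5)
Your proposal is correct and follows essentially the same route as the paper: invoke Lemma \ref{1L} to get that the feasible set $X$ is SEI, then apply the theorem on optimal-solution sets of SQSEP functions to conclude $X_{opt}$ is SEI. In fact you cite Theorem \ref{20} (the SQSEP version), which is the more appropriate reference here than the paper's citation of Theorem \ref{19} (the SSEP version), and your spelled-out mechanism matches the intended argument.
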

       \begin{proof}
       	By Lemma \ref{1L}, then set $X$ is SEI. Hence, by the Theorem \ref{19} the set $X_{opt}=\{s\in X:h(s)=\beta\}$ of optimal solutions of (\ref{4.1}) is SEI.	 
       \end{proof}
       \begin{corollary}
       	If the functions $h:R^{n}\rightarrow R,~h_{j}:R^{n}\rightarrow R,~ 1\leq j\leq k,$ are SSEP w.r.t. $\Psi$ on $R^{n}$, then the set of optimal solutions of (\ref{4.1}) is SEI.
       \end{corollary}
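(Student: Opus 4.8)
The plan is to deduce this directly from the machinery already assembled, mirroring the proof of the preceding SQSEP theorem. First I would observe that the constraint functions $h_{j}$, $1\leq j\leq k$, being SSEP w.r.t. $\Psi$ on $R^{n}$, satisfy the hypothesis of Lemma \ref{1L} (which itself rests on Theorem \ref{13T}); hence the feasible set $X=\{s\in R^{n}:h_{j}(s)\leq 0,\ 1\leq j\leq k\}$ of (\ref{4.1}) is a SEI set.

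Next I would note that the objective function $h$, being SSEP w.r.t. $\Psi$ on all of $R^{n}$, is in particular SSEP w.r.t. $\Psi$ on the SEI set $X$: the defining inequality $h(\alpha t+Et+\lambda\Psi(\alpha s+Es,\alpha t+Et))\leq\lambda h(s)+(1-\lambda)h(t)$ holds for all $s,t\in R^{n}$, hence for all $s,t\in X$, and the point $\alpha t+Et+\lambda\Psi(\alpha s+Es,\alpha t+Et)$ lies in $X$ precisely because $X$ has just been shown to be SEI. Thus both hypotheses of Theorem \ref{19} are in force on $X$.

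Finally, setting $\beta=\min\limits_{s\in X}h(s)$, Theorem \ref{19} yields that $X_{opt}=\{s\in X:h(s)=\beta\}$ is a SEI set. Since $X_{opt}$ is exactly the set of optimal solutions of (\ref{4.1}), the claim follows. There is essentially no obstacle here: the substance lies entirely in Lemma \ref{1L} and Theorem \ref{19}, and the only point requiring (brief) care is the verification that ``SSEP on $R^{n}$'' restricts to ``SSEP on the SEI subset $X$'', which is immediate once $X$ is known to be SEI.
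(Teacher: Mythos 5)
Your proposal is correct and follows exactly the route the paper intends: Lemma \ref{1L} gives that the feasible set $X$ is SEI, and Theorem \ref{19} then gives that $X_{opt}$ is SEI, which is precisely how the paper proves the immediately preceding (SQSEP) theorem of which this is the SSEP analogue. Your added remark that ``SSEP on $R^{n}$'' restricts to ``SSEP on the SEI subset $X$'' is a small but welcome point of care that the paper leaves implicit.
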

    \begin{theorem}\label{27T}   
           	Let $h:R^{n}\rightarrow R$ be a SSSEP (SSQSEP) function w.r.t. $\Psi$ on $R^{n}$, $h_{j}:R^{n}\rightarrow R,~ 1\leq j\leq m$, be SSEP functions w.r.t. $\Psi$ on
           	$R^{n}$. Suppose $\emptyset\neq X$ represents set of feasible solutions defined by (\ref{4.2}) and $s$ is a fixed point of $E$. If $s$ is a local minimum point of (\ref{4.1}), then $s$ is a strict global minimum point of (\ref{4.1}).
           \end{theorem}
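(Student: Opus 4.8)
The plan is to argue by contradiction, exploiting the strict inequality in the definition of SSSEP (respectively SSQSEP) together with the fact that the feasible set is strongly $E$-invex and that $s$ is fixed by $E$. First I would record that, by Lemma \ref{1L}, the feasible set $X$ defined in (\ref{4.2}) is a SEI set w.r.t. $\Psi$; this is precisely where the hypothesis that each $h_{j}$ is SSEP is consumed, and it is what allows one to stay inside $X$ while moving toward a competing point.

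Next, suppose to the contrary that $s$ is a local minimum of (\ref{4.1}) but not a strict global minimum. Then there is a feasible point $\bar t\in X$ with $\bar t\neq s$ and $h(\bar t)\le h(s)$. Using the defining inclusion of a SEI set with the roles ``$t=s$, $s=\bar t$'' and $\alpha=0$, and recalling $Es=s$ because $s$ is a fixed point of $E$, we get, for every $\lambda\in[0,1]$,
\begin{eqnarray*}
 Es+\lambda\,\Psi(E\bar t,\,Es)=s+\lambda\,\Psi(E\bar t,\,s)\in X.
\end{eqnarray*}
As $\lambda\to 0^{+}$ this point tends to $s$, so local minimality of $s$ produces a $\delta\in(0,1)$ with
\begin{eqnarray*}
 h\bigl(s+\lambda\,\Psi(E\bar t,\,s)\bigr)\ge h(s)\qquad\text{for all }\lambda\in(0,\delta).
\end{eqnarray*}
On the other hand, applying the strict inequality in the definition of SSSEP at the same data ($t=s$, $s=\bar t$, $\alpha=0$, $\lambda\in(0,1)$) gives
\begin{eqnarray*}
 h\bigl(s+\lambda\,\Psi(E\bar t,\,s)\bigr)<\lambda h(\bar t)+(1-\lambda)h(s)\le \lambda h(s)+(1-\lambda)h(s)=h(s),
\end{eqnarray*}
the last step because $h(\bar t)\le h(s)$. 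For any $\lambda\in(0,\delta)$ this contradicts the previous display, so no such $\bar t$ can exist and $s$ is a strict global minimum. The SSQSEP case runs identically, the middle term being replaced by $\max\{h(\bar t),h(s)\}=h(s)$.

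The step I expect to require the most care is the invocation of the strict inequality: the SSSEP (SSQSEP) definition only yields strictness when $\alpha\bar t+E\bar t\neq\alpha s+Es$, i.e. here, with $\alpha=0$, when $E\bar t\neq Es=s$ (resp. when $h(\bar t)\neq h(s)$). So I would first dispose of the degenerate possibility $E\bar t=s$ (resp. $h(\bar t)=h(s)$): if the failure of strict global minimality is witnessed by some $\bar t$ with $h(\bar t)<h(s)$ the argument above applies verbatim, while if every feasible point comparable with $s$ shares the value $h(s)$ then all of them — including $s$ — are global minimizers, and the singleton conclusions of Theorem \ref{19} (resp. Theorem \ref{20}) already force $\bar t=s$, a contradiction. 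Once this case split is made, the remainder is a routine limit argument, and the hypotheses ``$h_{j}$ SSEP'' and ``$s$ a fixed point of $E$'' enter only through Lemma \ref{1L} and through the reduction $Es=s$, respectively.
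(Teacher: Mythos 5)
Your proof is correct and rests on the same two pillars as the paper's own argument: Lemma \ref{1L} to keep the perturbed point feasible, and the (strict) preinvexity inequality to drive its value below $h(s)$, contradicting local minimality once the perturbation is small. The execution, however, differs in three worthwhile ways. First, you fix $\alpha=0$ from the outset, so the perturbed point is $s+\lambda\Psi(E\bar t,s)$ with $s$ itself as base point (this is exactly where $Es=s$ enters); the paper keeps $\alpha$ free and is forced into a four-way case analysis on whether $\alpha=0$ and whether $\Psi$ vanishes, with explicit choices such as $\bar\lambda=\min\{1,\epsilon/\|\Psi(u^{*},s^{*})\|\}$ and $\bar\alpha=\min\{1,\epsilon/\|s\|\}$. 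Your specialization collapses the paper's Cases (3) and (4) entirely and replaces the remaining $\epsilon$-bookkeeping by a one-line limit argument; nothing is lost, since the defining inequalities hold for every $\alpha\in[0,1]$. Second, you start from $h(\bar t)\le h(s)$ rather than $h(\bar t)<h(s)$ and dispose of the tie case $h(\bar t)=h(s)$ via the singleton conclusion of Theorem \ref{19} (resp.\ Theorem \ref{20}); the paper only refutes a competitor with strictly smaller value, so it really establishes that $s$ is a global minimum and leaves the word ``strict'' in the conclusion unaddressed --- your version is the more complete one. Third, one spot in your case split should be straightened out: in the SSSEP branch the strict inequality is unavailable precisely when $E\bar t=Es=s$, and this can occur even when $h(\bar t)<h(s)$, so ``applies verbatim'' is not quite accurate there; the repair is immediate, because when $h(\bar t)<h(s)$ the non-strict SSEP inequality already gives $h(s+\lambda\Psi(E\bar t,s))\le\lambda h(\bar t)+(1-\lambda)h(s)<h(s)$ for $\lambda\in(0,1)$, which is all the contradiction requires.
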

           \begin{proof}
           	Since $s$ is fixed local minimum point of (\ref{4.1}), we get that $s\in R^{n}$, $h_{j}(s)\leq 0,~ 1\leq j\leq m$ and  $~\exists~\epsilon>0$ s.t. $h_{0}(s)\leq h_{0}(t), ~\forall~ t\in B_{\epsilon}(s)\cap X-\{s\},$ where $B_{\epsilon}(s)=\{t\in R^{n}:\|t-s\|<\epsilon\}$ is an open ball. Suppose that $\exists~u\in X,~u\neq s$ s.t. $h_{0}(u)<h_{0}(s).$ Since $u,s\in X$, for any fixed $\alpha\in[0,1]$ and $\lambda\in[0,1]$, due to Lemma \ref{1L}, we get $\alpha s+s^{*}+\lambda\Psi(\alpha u+u^{*},\alpha s+s^{*})\in X.$ For $\alpha u+u^{*}\neq \alpha s+s^{*}$, $\alpha\in[0,1]$ and $\lambda\in[0,1]$, we have
           	\begin{equation}\label{4.3a}
           		h(\alpha s+s^{*}+\lambda\Psi(\alpha u+u^{*},\alpha s+s^{*}))<\lambda h(u)+(1-\lambda)h(s)=h_{0} ~~ (say).
           	\end{equation}
           Four possible cases will arise:
           	 \vspace{1mm}
           	
           	\noindent
           	$\mathbf{Case (1):}$ $\Psi(u^{*},s^{*})=0$ and  $\alpha=0$, then (\ref{4.3a}) results in a contradiction.
           	
           	\vspace{.2cm}
           	\noindent
           	$\mathbf{Case (2):}$ $\Psi(u^{*},s^{*})\neq 0$ and $\alpha=0$, we choose $\bar{\lambda}=\min\left\{1,\dfrac{\epsilon}{\|\Psi(u^{*},s^{*})\|}\right\}$ and, for any $\lambda\in [0,\bar{\lambda})$, we get
           	\begin{eqnarray*}
           		\|s^{*}+\lambda \Psi(u^{*},s^{*})-s^{*}\|&=&\lambda\|\Psi(u^{*},s^{*})\|\\
           		&<&\bar{\lambda}\|\Psi(u^{*},s^{*})\|\\&\leq&\epsilon. 
           	\end{eqnarray*}
           	In this case, we obtained that  $s^{*}+\lambda \Psi(u^{*},s^{*})\in B_{\epsilon}(s)\cap S-\{s\}$,   $\forall\lambda\in(0,\bar{\lambda})$, and from (\ref{4.3a}), we get 
           	\begin{eqnarray*}h(s^{*}+\lambda \Psi(u^{*},s^{*}))<h_{0},
           		\end{eqnarray*}
           	which contradicts the fact that $u$ is a strict global minimum point of (\ref{4.1}).
           	
           	\vspace{.2cm}
           	\noindent
           	$\mathbf{Case (3):}$ $\Psi(\alpha u+u^{*},\alpha s+s^{*})=0$ and $\alpha\neq0$, we choose $\bar{\alpha}=\min\left\{1,\dfrac{\epsilon}{\|s\|}\right\}$ and, for any $\lambda\in [0,\bar{\alpha})$, we get \begin{eqnarray*}
           		 \|\alpha s+s^{*}-s^{*})\|&=&\alpha\| s \|\\
           		 &<&\bar{\alpha}\| s \|\\&\leq&\epsilon.
           	\end{eqnarray*}
           	In this case, we obtained that $\alpha s+s^{*} \in B_{\epsilon}(s)\cap X-\{s\}$,   $\forall\alpha\in(0,\bar{\alpha})$, and from (\ref{4.3a}), we get
           	 \begin{eqnarray*}
           	 	h(\alpha s+s^{*})<h_{0},
           	\end{eqnarray*}
           	which contradicts the fact that $u$ is a strict global minimum point of (\ref{4.1}).
           	
           	\vspace{.2cm}
           	\noindent
           	$\mathbf{Case (4):}$ $\Psi(\alpha u+u^{*},\alpha s+s^{*})\neq 0$ and $\alpha\neq0$, we choose $\bar{\alpha}=\min\left\{1,\dfrac{\epsilon_{1}}{\|s\|}\right\}$ and, for any $\alpha\in [0,\bar{\alpha})$,  we have\\
           	$\|\alpha s+s^{*}+\lambda \Psi\left(\alpha s+u^{*},\alpha s+s^{*}\right)-s^{*}\|$ 
           	\begin{eqnarray*}
           		 &\leq&\alpha\|s\|+\lambda\|\Psi\left(\alpha u+u^{*},\alpha s+s^{*}\right)\|\\
           		 &<&\bar{\alpha}\|s\|+\lambda\|\Psi\left(\bar{\alpha} u+u^{*},\bar{\alpha} s+s^{*}\right)\|\\
           		 &\leq&\epsilon_{1}+\lambda\left|\left|\Psi\left(\dfrac{\epsilon_{1}}{\|s\|} u+u^{*},\dfrac{\epsilon_{1}}{\|s\|} s+s^{*}\right)\right|\right|.
           	\end{eqnarray*}
           	again  we choose $\bar{\lambda}=\min\left\{1,\dfrac{\epsilon_{2}}{\left|\left|\Psi\left(\dfrac{\epsilon_{1}}{\|s\|} u+u^{*},\dfrac{\epsilon_{1}}{\|s\|} s+s^{*}\right)\right|\right|}\right\}$ and, for any $\lambda\in [0,\bar{\lambda})$,
           	\begin{eqnarray*}
           		 &<&\epsilon_{1}+\bar{\lambda}\left|\left|\Psi\left(\dfrac{\epsilon_{1}}{\|s\|} u+u^{*},\dfrac{\epsilon_{1}}{\|s\|} s+s^{*}\right)\right|\right|.\\
           		 &\leq &\epsilon_{1}+\epsilon_{2}\\&=&\epsilon.
           	\end{eqnarray*}
           	In this case, we obtained that  $\alpha s+s^{*}+\lambda\Psi(\alpha u+u^{*},\alpha s+s^{*})\in B_{\epsilon}(s)\cap X-\{s\}$,   $\forall\alpha\in (0,\bar{\alpha})$, $\lambda\in(0,\bar{\lambda})$, and from (\ref{4.3a}), we get
           	 \begin{eqnarray*}
           		h(\alpha s+s^{*}+\lambda\Psi(\alpha u+u^{*},\alpha s+s^{*}))<h_{0},
           		\end{eqnarray*}
           	which contradicts the fact that $s$ is a strict global minimum point of (\ref{4.1}). 
           \end{proof}
       Analogous result to Theorem \ref{27T} is as follows:
           \begin{theorem}   
          	Let $h:R^{n}\rightarrow R$ be a SSQSEP function w.r.t. $\Psi$ on $R^{n}$, $h_{j}:R^{n}\rightarrow R,~ 1\leq j\leq k$, SQSEP functions w.r.t. $\Psi$ on
          	$R^{n}$ and suppose $\emptyset\neq X$ represents set of feasible solutions defined by \ref{4.2}. If $s$ is a fixed local minimum point of (\ref{4.1}), then $s$ is a strict global minimum point of (\ref{4.1}).
          \end{theorem}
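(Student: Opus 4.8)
The plan is to show that the proof of Theorem \ref{27T}, in its parenthetical SSQSEP form, carries over essentially verbatim once one observes that the feasibility structure is unchanged. First I would note that, although the present hypothesis only asks the constraint functions $h_{j}$ to be SQSEP (rather than SSEP), Theorem \ref{13T} is stated for SSEP \emph{and} SQSEP functions, so Lemma \ref{1L} still applies and the set $X$ defined by (\ref{4.2}) is a SEI set. Hence for all $u,s\in X$, $\alpha\in[0,1]$, $\lambda\in[0,1]$ the point $\alpha s+Es+\lambda\Psi(\alpha u+Eu,\alpha s+Es)$ lies in $X$.

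Writing $s^{*}=Es$, $u^{*}=Eu$ and using that $s$ is a fixed point of $E$ (so $s^{*}=s$) and a local minimum, fix $\epsilon>0$ with $h(s)\le h(t)$ for all $t\in B_{\epsilon}(s)\cap X-\{s\}$. Assume toward a contradiction that $s$ is not a strict global minimum, so there is $u\in X$, $u\neq s$, with $h(u)<h(s)$. Since $h$ is SSQSEP and $h(u)\neq h(s)$, for every $\alpha\in[0,1]$ and every $\lambda\in(0,1)$ we get
\begin{equation*}
h(\alpha s+s^{*}+\lambda\Psi(\alpha u+u^{*},\alpha s+s^{*}))<\max\{h(u),h(s)\}=h(s)=h_{0}\ (\text{say}).
\end{equation*}
I would then run the same four-case split as in Theorem \ref{27T}: Case 1, $\Psi(u^{*},s^{*})=0$ and $\alpha=0$, where the left side equals $h(s)$ and the inequality is already a contradiction; Case 2, $\Psi(u^{*},s^{*})\neq0$ and $\alpha=0$, using $\bar\lambda=\min\{1,\epsilon/\|\Psi(u^{*},s^{*})\|\}$ to keep $s^{*}+\lambda\Psi(u^{*},s^{*})$ in $B_{\epsilon}(s)\cap X-\{s\}$; Case 3, $\Psi(\alpha u+u^{*},\alpha s+s^{*})=0$ and $\alpha\neq0$, using $\bar\alpha=\min\{1,\epsilon/\|s\|\}$; and Case 4, both nonzero and $\alpha\neq0$, splitting $\epsilon=\epsilon_{1}+\epsilon_{2}$ and choosing $\bar\alpha$, $\bar\lambda$ exactly as in Theorem \ref{27T}. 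In each case the displayed strict inequality, evaluated at a feasible point arbitrarily close to $s$, contradicts the local minimality of $s$.

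The norm estimates in Cases 2--4 are identical to those already performed in Theorem \ref{27T}, so I do not expect a genuine obstacle. The only small point to check is that the radius $\bar\lambda$ in Cases 2 and 4 can be taken strictly less than $1$ without harm, which is needed because the SSQSEP inequality is strict only for $\lambda\in(0,1)$; this is immediate since one may always shrink $\bar\lambda$. I would also remark that feasibility of every perturbed point is supplied precisely by Lemma \ref{1L}, so that weakening the hypothesis on the $h_{j}$ from SSEP to SQSEP costs nothing.
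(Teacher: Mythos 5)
Your proof is correct and is precisely the argument the paper intends: the paper states this theorem without proof, merely declaring it analogous to Theorem \ref{27T}, and your adaptation carries out that analogy faithfully — replacing the convex-combination bound by the $\max\{h(u),h(s)\}=h(s)$ bound, invoking Lemma \ref{1L} (via the SQSEP case of Theorem \ref{13T}) for feasibility of the perturbed points, and running the same four-case norm estimates. Your explicit attention to the restriction $\lambda\in(0,1)$ required for the strict SSQSEP inequality, and your correct identification of the contradiction as being with the local minimality of $s$, are both sound (and in fact cleaner than the wording in the paper's proof of Theorem \ref{27T}).
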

      
        \section{\bf{Conclusion}}\label{5conl}
       
      In this article, we have presented the concept of semi strongly $E$-preinvex (semi quasi strongly $E$-preinvex), semi strongly $E$-invex, semi quasi strongly $E$-invex and semi pseudo strongly $E$-invex functions. Sufficient examples have been presented to in support these definitions. The vital relationship and characterizations of these functions have been discussed and several interesting properties of these functions have been established. An application to non-linear programming problem for semi strongly $E$-preinvex (semi quasi strongly $E$-preinvex) functions has also been presented. Our results generalize the previously known results given by different authors; see \cite{Fulga,Hussain2,Hussain3,Jaiswal,Youness1,youness2,Youness3}. This work can be easily explored over Riemannian manifolds in future.
      \vspace{.2cm}
      
      All the concepts mentioned above in this paper are fundamental parts of pure and applied mathematics, which play a vital role in determining the solution of practical problems in real life, such as mathematical programming, optimization problems, variational inequalities, and equilibrium problems; for more details, see \cite{Hussain2,Jaiswal,Jeyakumar,Noor,Yang}.

  \end{document}